\def\input@path{{../}{./}}
\newtheorem{remark}[theorem]{Remark}
\numberwithin{equation}{section}
\newcommand{\RR}{\mathbb{R}}
\newcommand{\ZZ}{\mathbb{R}}
\newcommand{\cF}{{\mathcal{F}}}
\newcommand{\bI}{{\mathbf{I}}}
\newcommand{\bA}{{\mathbf{A}}}
\newcommand{\bB}{{\mathbf{B}}}
\newcommand{\bM}{{\mathbf{M}}}
\newcommand{\cN}{{\mathcal{N}}}
\newcommand{\cJ}{{\mathcal{J}}}
\newcommand{\cS}{{\mathcal{S}}}
\newcommand{\cT}{{\mathcal{T}}}
\newcommand{\cR}{{\mathcal{R}}}
\newcommand{\bu}{\mathbf{u}}
\newcommand{\bv}{\mathbf{v}}
\newcommand{\vn}{\bm{n}}
\newcommand{\vx}{\bm{x}}
\newcommand{\vz}{\bm{z}}
\newcommand{\vH}{\bm{H}}
\newcommand{\vL}{\bm{L}}
\newcommand{\bsig}{\bm{\sigma}}
\newcommand{\btau}{\bm{\tau}}
\newcommand{\Fz}{\cF_{\vz}}
\newcommand{\subcT}{\raisebox{-0.3pt}{$\scriptscriptstyle{\cT}$}}
\newcommand{\bsigtz}{\bsig^{\Delta}_{\vz,\subcT}}
\newcommand{\divv}{\nabla\!\cdot\!}
\newcommand{\mcup}{\mathsmaller{\,\bigcup}}
\newcommand{\p}{\partial}
\newcommand{\at}[1]{\big\vert_{\raisebox{-0.5pt}{\scriptsize$#1$}} }
\newcommand{\abs}[1]{\left\vert {#1}\right\vert}
\newcommand{\norm}[1]{\left\Vert#1\right\Vert}
\newcommand{\uiter}{\bar{u}_{_\cT}}
\newcommand{\binprod}[2]{\bigl( {#1},\,{#2} \bigr)}
\newcommand{\diam}{\operatorname{diam\,}}
\newcommand{\jump}[2]{\lbrack\hspace{-1.5pt}\lbrack {#1} 
{\rbrack\hspace{-1.5pt}\rbrack}_{\raisebox{-2pt}{\scriptsize$#2$}} }
\newcommand{\RT}{\bm{\cR\!\cT}}
\newcommand{\vHdiv}{\vH(\operatorname{div};\Omega)}
\newcommand{\vhdiv}{\vH(\operatorname{div})}
\newcommand{\SigmaTz}{\bm{\Sigma}_{\vz,\subcT}}
\newcommand{\bsigtk}{\bsig^{\Delta}_{K,\subcT}}
\newcommand{\bsigt}{\bsig^{\Delta}_{\subcT}}
\newcommand{\bsigc}{\bsig^{c}_{\subcT}}
\newcommand{\sub}[1]{\raisebox{-2pt}{\scriptsize$#1$}}
\title{A Discretization-Accurate Stopping Criterion for Iterative Solvers for Finite Element Approximation\thanks{
This work was performed under the auspices of the U.S. Department of Energy by
Lawrence Livermore National Laboratory under Contract DE-AC52-07NA27344
({LLNL-JRNL-789117}).
This work was supported in part by the National Science Foundation
under grants DMS-1320608, DMS-1418934, and DMS-1522707.}}
\author{Zhiqiang Cai\thanks{
Department of Mathematics, Purdue University, 150 N. University
Street, West Lafayette, IN 47907-2067, zcai@math.purdue.edu.}
\and Shuhao Cao\thanks{
Department of Mathematics, University of California Irvine, Irvine, CA 92697, 
scao@math.uci.edu.}
\and Robert D. Falgout\thanks{Center for Applied Scientific Computing, 
Lawrence Livermore National Laboratory, 
Livermore, CA 94551-0808, falgout2@llnl.gov.}}
\date{}
\begin{document}
\maketitle

\begin{abstract}
This paper introduces a discretization-accurate stopping criterion of symmetric iterative methods for solving systems of algebraic equations resulting
from the finite element approximation. The stopping criterion consists of the 
evaluations of the discretization and the algebraic error estimators, that are based 
on the respective duality error estimator and the difference of two consecutive 
iterates. Iterations are terminated 
when the algebraic estimator is of the same 
magnitude as the discretization estimator. Numerical results for multigrid 
$V(1,1)$-cycle and symmetric Gauss-Seidel iterative methods are presented
for the linear finite element approximation to the Poisson equations. A large 
reduction in computational cost is observed compared to the 
standard residual-based stopping criterion.
\end{abstract}

\section{Introduction}
Consider the Dirichlet boundary value problem in a bounded polygonal/polyhedral 
domain $\Omega\subset \RR^d$ ($d=2,3$) for the diffusion equation as 
follows:
\begin{equation}
\label{eq:pb-ps}
\left\{
\begin{aligned}
-\divv(A \nabla u) &= f,  
&\; \text{ in }\, \Omega,
\\
u  &=g ,  &\; \text{ on }\, \p\Omega,
\end{aligned} 
\right.
\end{equation}
where $A$ is a scalar diffusion coefficient, and the data $f\in L^2(\Omega)$ and 
$g\in 
L^2(\partial \Omega)$.

In practice, the system of algebraic equations resulting from the finite element 
approximation to \eqref{eq:pb-ps} is often solved by iterative methods, e.g., 
Gauss-Seidel, conjugate gradient,
multigrid methods, etc. Instead of having 
the exact solution $u_{_\cT}$ of the algebraic system at hand, 
$\bar{u}_{_\cT} := {u}_{_\cT}^{(k)}$ is the current output from an iterative solver,
where $k$ is the number of iterations. The total energy error of $\bar{u}_{_\cT}$ to the solution $u$ of the 
continuous problem in \eqref{eq:pb-ps} consists of both discretization and algebraic errors as follows:
\begin{equation}
\label{eq:est-id}
\underbrace{\norm{u - \bar{u}_{_\cT}}_A^2}_\text{total error}
=\underbrace{\norm{u_{_\cT} - \bar{u}_{_\cT}}_A^2}_\text{algebraic error} + 
\underbrace{\norm{u - u_{_\cT}}_A^2}_\text{discretization error},
\end{equation}
where $\norm{\cdot}_A$ is the energy norm associated with the problem in
\eqref{eq:pb-ps} (for the norm notations, see
section~\ref{sec:prelim}).

The goal of this paper is to propose a stopping criterion for 
iterative solvers. To do so, we need to develop two error estimators for the 
respective discretization and algebraic errors. 
Since the discretization error is fixed for a given finite element space,
(\ref{eq:est-id}) clearly indicates that the stopping criterion of the iterative solver is when the algebraic estimator is of the same 
magnitude as the discretization estimator, provided that both represent their error 
counterparts reliably.

Discretization error estimators for the exact finite element approximations have been intensively studied during the past four decades
(see books \cite{Ainsworth-Oden,Verfurth-13} and references therein).
In the context of stopping criterion for iterative solvers, the residual-based a posteriori error estimator
was employed for the conforming finite element approximation by several researchers (see, e.g., 
\cite{Becker1995adaptive,Rannacher1999,Arioli2012interplay,Arioli2013stopping,Arioli2005stopping});
recovery-based estimators were used by Vohral\'{i}k, et al. in 
\cite{Jiranek2010} for the 
finite volume discretization and in \cite{Dolejvsi2013} for the discontinuous finite 
element approximation on non-matching grids.

In this paper, we will adopt the equilibrated flux error estimator 
(see, e.g.,  
\cite{Braess06eqres,Destuynder99explicit,Verfurth09note,Cai12eqrobust}) for the 
discretization error.
This is because the reliability bound of estimators of this type is constant free.
Using this technique, a locally post-processed flux based on the iterate $\uiter$ will be constructed. 
Unlike the exact finite element approximation $u_{_\cT}$, the local problems based on the current iterate $\uiter$ on vertex patches
are not consistent. To overcome this difficulty, we modify the local problems by adding back the algebraic errors. The resulting discretization
error estimator plus the algebraic error is proved to be reliable and the reliability bound for the discretization estimator 
component is constant free (Theorem \ref{thm:r}).

To construct the algebraic error estimator, we first bound $\big\Vert{u_{_\cT} - 
{u}_{_\cT}^{(k)}}\big\Vert_A$ above by
the energy norm of the difference of consecutive iterates, and the constant in the upper bound depends on the spectral radius of 
the error propagation operator (Theorem \ref{thm:ra}). The unknown spectral 
radius is further approximated by the ratio
of the $\ell^2$-norms of the residuals of consecutive iterates. The resulting 
algebraic error estimator is then proved 
to be reliable when sufficiently many iterations have been performed. 

Lastly, in Section \ref{sec:numex}, based on the discretizaton and algebraic 
estimators, 
a new stopping criterion for some given linear solvers is verified to reduce the 
computational cost numerically compared with its conventional residual-based 
counterpart used in \emph{hypre} \cite{Falgout2002hypre}. In addition, the numerics 
shows 
promising results in that the bounds are 
independent of the coefficient jump ratio even without the quasi-monotonicy 
assumption \cite{Bernardi00nonsmooth, Dryja96quasimonotone, Petzoldt02discont} for 
the distribution of the diffusion coefficient $A$.
\section{Finite element method and iterative solver}
\label{sec:prelim}
In this section, all preliminaries are presented. Denote $H^1(\Omega)$ 
with a specified boundary value as
$H^1_g(\Omega):= \{v\in H^1(\Omega):\, v= g  \; \text{ on }\, \p\Omega\}$,
and then the variational problem of \eqref{eq:pb-ps} is
\begin{equation}
\label{eq:pb-pw}
\text{Find } \; u \in H^1_g(\Omega)\; 
\text{ such that }\;
 \binprod{A \nabla u }{\nabla v} = \binprod{f}{v} , \quad 
\forall v\in H^1_{0}(\Omega), 
\end{equation}
where $\binprod{\cdot}{\cdot}$ denotes the $L^2$-inner product on the whole 
domain. 

Let $\cT = \{K\}$ be a triangulation of $\Omega$ using simplicial 
element, where $\cT$ is assumed to be quasi-uniform and regular. For each $K\in 
\cT$, $h_K := \diam (K) = O(|K|^{1/d})$.
The set of all the vertices of this triangulation is denoted by $\cN$. 
Throughout this paper, the term ``face'' is used to refer to the 
$(d-1)$-facet of a $d$-simplex in this triangulation ($d=2,3$). For the $d=2$ case, 
a face actually represents an edge. The set of all the interior faces is denoted by 
$\cF$. For any $F\in \cF$, $h_F:= \diam (F) =O(|F|^{1/(d-1)})$. 
Each face $F\in \cF$ is associated with a fixed unit normal $\vn_F$ globally. For 
any function or distribution $v$ well-defined on the two elements sharing a face $F$ 
respectively, define $\jump{v}{F} = v^- - v^+$ on an interior face. The $v^-$ and 
$v^+$ are defined in the limiting sense of $v^{\pm} = \lim\limits_{\epsilon\to 
0^{\pm}}v(\vx+\epsilon\vn_F)$. If $F$ is a boundary face, 
the function $v$ is extended by zero outside the domain to compute $\jump{v}{F}$. 
For every geometrical object $D$ and for every 
integer $k \geq 0$, $P_k(D)$ denotes the set of polynomials of degree 
$\leq k$ on $D$.

For the purpose of constructing the local error estimation procedure for the finite 
element approximation, notations of the following local geometric objects are used 
in this paper. First, denote by $\cN_K$ the set of all the vertices of $K\in \cT$. For any 
vertex $\vz \in \cN$, denote by
\[
\omega_{\vz} :=  \mcup_{ \{K\in \cT: \;\vz\in \cN_K \} } K
\]
as the vertex patch, which is the union of all elements sharing $\vz$ as a 
common vertex. Now $\cT_{\vz}$ stands for the triangulation of this patch such that 
$\cT_{\vz}:= \{K: K\subset\omega_{\vz}\}$. Denote
\[
\omega_K := \mcup_{\vz\in \cN_K}\omega_{\vz}
\] 
as the element patch for $K$ that contains all the elements sharing a vertex with 
$K$. For a face $F\in \cF$, denote the face patch as
\[
\omega_F := \mcup_{F\cap \p K\neq \emptyset} K,
\]
which contains the elements sharing $F$ as a common face. The $L^2$-inner 
product and norm on $\omega = \cup K\subset \Omega$ are denoted by 
\[
\binprod{u}{v}_{\omega} := \sum_{K\subset \omega} (u,v)_K \quad \mbox{and}\quad
\norm{v}_{0,\omega}^2 :=  ({v},{v})_{\omega},
\]
respectively. These notations carry through for vector-valued functions. 
The ``energy'' seminorm associated 
with the problem \eqref{eq:pb-pw} is (with slight abuse of notation, because the local 
seminorm is denoted as a norm):
\begin{equation}
\norm{v}_A^2 := \binprod{A \nabla u }{\nabla v} \;\text{ and }\; 
\norm{v}^2_{A,\omega} 
:= 
\binprod{A \nabla u }{\nabla v}_{\omega} .
\end{equation}

Let $\cF_K$ be 
the set of faces of an element $K\in\cT$. 
Denote the set of the interior faces within $\omega_{\vz}$ as:
\[
\Fz := \{F\in \cF: F\in \cF_K 
\text{ for }K\subset \omega_{\vz},\; F\cap \p\omega_{\vz} = \emptyset\}.
\]
Denote the $H^1$-conforming linear finite element space by
\begin{equation}
\label{eq:sp-p1}
\cS^1 := \{v\in H^1(\Omega): \, 
v\at{K}\in P_1(K),\; \,\,\forall \,\, K\in \cT \},
\end{equation}
and the piecewise constant space with respect to the triangulation $\cT$ by
\begin{equation}
\label{eq:sp-p0}
\cS^0 := \{v\in L^2(\Omega): \, 
v\at{K}\in P_0(K),\; \,\,\forall \,\, K\in \cT \},
\end{equation}
then the finite element approximation to 
\eqref{eq:pb-pw} is 
\begin{equation}
\label{eq:pb-pd}
\left\{
\begin{aligned}
&\text{Find } \; u_{_\cT} \in 
\cS^1\cap H^1_g(\Omega)\; \text{ 
such that}
\\[1mm]
& \binprod{A \nabla u_{_\cT} }{\nabla v}  = \binprod{f}{v}, \quad 
\forall \,\, v \in \cS^1\cap H^1_0(\Omega).
\end{aligned}
\right.
\end{equation}
For the presentation purpose, here it is assumed that both the diffusion coefficient 
$A$ and the data $f$ are in $\cS^0$, and denote $A\at{K} = A_K$, $f\at{K} = f_K$. 
Additionally, the Dirichlet boundary data $g$ can be represented by the trace of a 
function in $\cS^1$. In this setting, no data oscillation term will be present in 
the final error estimate bounds.

Let $\phi_{\vz_i}$ be the 
Lagrange nodal basis function of $\cS^1$ associated with an interior vertex 
$\vz_i\in \cN$.  Using these nodal basis functions, the discrete problem in \eqref{eq:pb-pd}
may be written as the following system of linear equations:
\begin{equation}
\label{eq:pb-pl}
\bA \bu =\mathbf{f},
\end{equation}
where the stiffness matrix $\bA$ 
is $ \bA[i,j] = a_{ij}$ with 
$a_{ij} = \binprod{A \nabla \phi_{\vz_j} }{\nabla  \phi_{\vz_i}}$; the $\bu$ is the 
vector representation of the exact solution $u_{_\cT}$; and        
the $\mathbf{f}$ is the vector representation of the right hand side 
with $i$-th row $\mathbf{f}[i]$ of $\mathbf{f}$ being $(f,\phi_{\vz_i})$. 
For a given initial guess $\bu^{(0)}$, an iterative solver for problem \eqref{eq:pb-pl}
has the following form
\begin{equation}
\label{eq:pb-it}
\bu^{(k+1)} = \bu^{(k)} + \bB(\mathbf{f} - \bA \bu^{(k)} ),
\end{equation}
where $\bu^{(l)}$ is the vector representation of the $l$-th iterate $u_{_\cT}^{(l)}$
for $l=0,\,1,\, \cdots$. Our attention in this paper is restricted to symmetric iterative 
methods, i.e., the matrix $\bB$ in \eqref{eq:pb-it} is symmetric. 

Next we define the norms for vectors and matrices: with the help of the context, the 
usual $2$-norm $\norm{\cdot}_2$ for a vector $\bv \in \RR^n$ and a non-singular 
symmetric matrix $\bM \in \RR^{n\times n}$ is 
defined by:
\begin{equation}
\label{eq:nm-m}
\norm{\bv}_2 := \sqrt{\bv\cdot \bv}  \text{ and }
\norm{\bM}_2 := \sup_{\norm{\bv}_2 = 1} \norm{\bM \bv}_2 = \rho(\bM),
\end{equation}
respectively,
where $\rho(\bM)$ is the spectral radius of $\bM$ equaling its largest eigenvalue.

The stiffness 
matrix $\bA$ is symmetric positive definite for the Dirichlet boundary value 
problem. As a result, $\bA^{1/2}$ is non-singular and can be used to induce a norm:
\begin{equation}
\norm{\bv}_{\bA} := \sqrt{\bA\bv\cdot \bv} = \norm{\bA^{1/2}\bv}_2
\text{ and }
\norm{\bM}_{\bA} := \sup_{\norm{\bv}_{\bA} = 1} \norm{\bM \bv}_{\bA}.
\end{equation}
By definition it is straightforward to verify that:
\begin{equation}
\label{eq:nm-ma}
\norm{\bM}_{\bA} = \sup_{\norm{\bA^{1/2}\bv}_2 = 1} \norm{\bA^{1/2} \bM \bv}_{2}
= \norm{\bA^{1/2} \bM \bA^{-1/2}}_{2}.
\end{equation}
For a finite element function $v$ and its vector representation $\bv$, the following 
equivalence between vector norm and Sobolev norm holds as well:
\begin{equation}
\norm{v}_{A} = \norm{\bv}_{\bA}.
\end{equation}

\section{Discretization error estimator using an equilibrated flux}
In this section, firstly the duality theory for the error estimation is introduced. 
Then a locally post-processed flux based on the iterate $\uiter:= u_{_\cT}^{(k)} $ 
for a fixed $k\geq 1$ is constructed. Lastly the reliability of the 
estimator based on this recovered flux is proved in order that a stopping criterion 
can be designed for the iterative solver. 

\subsection{Duality theory}
It is known that the variational problem in \eqref{eq:pb-pw} can be rewritten as a 
functional minimization problem, where the primal functional is:
\begin{equation}
\label{eq:fct-p}
\cJ(v):= \frac{1}{2}\,\binprod{A \nabla v }{\nabla v}
- \binprod{f}{v}
\end{equation}
Then problem (\ref{eq:pb-pw}) is equivalent to the following minimization problem:
\begin{equation}
\label{eq:pb-p}
\text{Find }u \in H^1_g(\Omega) \; \text{ such that } \;
\cJ(u) = \min_{v\in H^1_g(\Omega)} \cJ(v).
\end{equation}
 
The dual functional with respect to \eqref{eq:fct-p} is:
\begin{equation}
\label{eq:fct-d}
\cJ^{*}(\btau) := -\frac{1}{2}\binprod{A^{-1}\btau}{\btau}.
\end{equation}
The dual problem is then to maximize $\cJ^{*}(\btau)$ in the following 
space:
\begin{equation}
\label{eq:space-duality}
\bm{\Sigma} := \{ \btau \in \vHdiv:\, \divv \btau = f\},
\end{equation}
and can be phrased as:
\begin{equation}
\label{eq:pb-d}
\text{Find }\bsig \in \bm{\Sigma} \; \text{ such that } \;
\cJ^*(\bsig) = \max_{\btau\in \bm{\Sigma}} \cJ^*(\btau).
\end{equation}

The foundation to use the dual problem in constructing a 
posteriori error estimator is that the minimum of the primal functional $\cJ(\cdot)$ 
coincides with the maximum of the dual functional $\cJ^*(\bsig)$ (see 
\cite{Ekeland-Temam} Chapter 3):
\begin{equation}
\label{eq:mm}
\cJ(u) = \cJ^*(\bsig) \; \text{ and } \;\bsig = - A\nabla u .
\end{equation}
Now that \eqref{eq:mm} is satisfied, then a 
guaranteed upper bound can be obtained as follows: for any 
$\bsig_{_\cT} \in \bm{\Sigma}_{\subcT}:= \bm{\Sigma}\cap \RT^0 $ being a subspace of 
$\bm{\Sigma}$, where $\RT^0$ is 
the lowest order Raviart-Thomas element (e.g., see \cite{Brezzi-Fortin}),
\begin{equation}
\label{eq:est-u}
 \norm{u-\uiter}_A^2 =2\Big(\cJ(\uiter) - \cJ(u) \Big)
= \;2 \Big(\cJ(\uiter) - \cJ^*(\bsig) \Big)
\leq  2\Big( \cJ(\uiter) - \cJ^*(\bsig_{_\cT}) \Big).
\end{equation}
One of the main goals of this paper is to locally 
construct such $\bsig_{_\cT}$ based on the current iterate
$\uiter$, so that the global reliability bound in \eqref{eq:est-u} is automatically 
met.

\subsection{Localized flux recovery}
\label{sec:fr}
Let $\bsig^{\Delta}$ be the correction from the numerical flux 
$\overline{\bsig}_{_\cT}:=-A\nabla \uiter$ to the true 
flux $\bsig:= -A\nabla u$: 
\begin{equation}
\bsig^{\Delta} := \bsig - \overline{\bsig}_{_\cT}  
\end{equation}
Decompose $\bsig^{\Delta}$ by a partition of unity 
$\{\phi_{\vz}\}_{\vz\in \cN}$, which is the set of the nodal basis functions for 
the linear finite element space $\cS^1$, as follows:
\begin{equation}
\bsig^{\Delta} = \sum_{\vz\in \cN} \bsig^{\Delta}_{\vz} \;\text{ with } \;
\bsig^{\Delta}_{\vz} := \phi_{\vz} \bsig^{\Delta}.
\end{equation}
Denote the element residual on an element $K$ and 
the jump of the normal component of the numerical flux on a face $F$ by
\begin{eqnarray}
&& {r}_K:= \bigl\{f+ \divv(A\nabla \uiter)\bigr\}\at{K}  = f_K 
\\[2mm]
\mbox{and } \quad &&
\label{eq:jump}
{j}_F := -\jump{A\nabla (u - \uiter)\cdot \vn_F}{F}=
\begin{cases}
\jump{A\nabla \uiter \cdot \vn_F}{F}, & \text{ if }F\in \Fz,
\\[3pt]
A\nabla (u - \uiter)\cdot \vn_F , & \text{if }F \subset \p\Omega,
\end{cases}
\end{eqnarray}
respectively. Note that $ {r}_K$ and ${j}_F$ are constants in $K$ and on 
$F$ if $F$ is an interior face, respectively. When $\vz\not \in \partial \Omega$ is 
an 
interior vertex, $\bsig^{\Delta}_{\vz}$ satisfies the following local problem: 
\begin{equation}
\label{eq:loc-f}
\left\{
\begin{aligned}
\divv \bsig^{\Delta}_{\vz} &= \phi_{\vz} {r}_K
- \nabla \phi_{\vz}\cdot \nabla(u- \uiter), & \text{ on } K\subset \omega_{\vz},
\\
\jump{ \bsig^{\Delta}_{\vz}\cdot \vn_F}{F} &= \phi_{\vz} {j}_F, 
& \text{ on } F\in \Fz,
\\
\bsig^{\Delta}_{\vz}\cdot \vn_F &= 0, & \text{ on } F\subset \p \omega_{\vz}.
\end{aligned}
\right.
\end{equation}
If $\vz \in \partial \Omega$, then the first equation in 
\eqref{eq:loc-f} is unchanged, and the flux jump equations change to
\begin{equation}
\label{eq:loc-fbd}
\left\{
\begin{aligned}
\jump{ \bsig^{\Delta}_{\vz}\cdot \vn_F}{F} &= \phi_{\vz} {j}_F, \quad 
& \text{ on } F\in \Fz \text{ and } F\not\subset  \p \omega_{\vz}\cap \p\Omega,
\\
\bsig^{\Delta}_{\vz}\cdot \vn_F &= 0, 
& \text{ on } F\subset \p \omega_{\vz}\backslash \partial \Omega. \qquad \qquad
\end{aligned}
\right.
\end{equation}


To approximate problem 
\eqref{eq:loc-f}, an approximated correction flux $\bsigtz$ is sought in the 
following broken lowest-order Raviart-Thomas space:  
\begin{equation}
\label{eq:sp-rtb}
\RT^0_{-1,\omega_{\vz}}:=
\Bigl\{ \btau\in \vL^2(\omega_{\vz}):\, 
\btau\at{K}\in \RT^0(K),\; \forall K\subset{\omega_{\vz}} \Bigr\},
\end{equation}
where $\RT^0(K)$ denotes the local lowest-order Raviart-Thomas space on $K$ (see 
\cite{Brezzi-Fortin}).

An explicit procedure called the hypercircle method or equilibration (see 
\cite{Braess-07,Braess06eqres}) is used to construct $\bsigtz$. The correction flux $\bsigtz$ 
satisfies the following problem on an interior vertex patch $\omega_{\vz}$ 
($\vz\not\in 
\partial \Omega$):
\begin{equation}
\label{eq:loc-fa}
\left\{
\begin{aligned}
\divv \bsigtz &= \bar{r}_{K,\vz} + c_{\vz}, & \text{ on } K\subset \omega_{\vz},
\\
\jump{ \bsigtz\cdot \vn_F}{F} &= \bar{j}_{F,\vz}, 
& \text{ on } F\in \Fz,
\\
\bsigtz\cdot \vn_F &= 0, & \text{ on } F\subset \p \omega_{\vz},
\end{aligned}
\right.
\end{equation}
where $\bar{r}_{K,\vz}$ and $\bar{j}_{F,\vz}$ are defined as the $L^2$-projection of 
$\phi_{\vz} {r}_{K}$ and $\phi_{\vz} {j}_F$ onto the constant space of $K$ 
and interior $F$, respectively, for $d=2,3$:
\begin{equation}
\begin{aligned}
\label{eq:est-rz}
\bar{r}_{K,\vz} &:= \Pi_K(\phi_{\vz} {r}_{K}) = 
\frac{1}{d+1}f_K = \frac{1}{d+1}r_{K},
\\[3pt]
\bar{j}_{F,\vz} &:= \Pi_F(\phi_{\vz} {j}_F) 
=\frac{1}{d} \jump{(A\nabla \uiter)\cdot \vn_F}{F} = \frac{1}{d}j_{F}.
\end{aligned}
\end{equation}
When $\vz\in \p \Omega$, $c_{\vz}=0$, and the normal fluxes in \eqref{eq:loc-fa} are 
modified accordingly by \eqref{eq:loc-fbd}.
%

Note that, without $c_{\vz}$, the compatibility condition for \eqref{eq:loc-fa}
is not automatically satisfied, that is,
\begin{equation*}
\sum_{K\subset \omega_{\vz}}\binprod{\bar{r}_{K,\vz} }{1}_{K} -\sum_{F\in \Fz} 
\binprod{\bar{j}_{F,\vz}}{1}_{F} \neq 0,
\end{equation*}
which implies that \eqref{eq:loc-fa} does not have a solution.
To guarantee the existence of a solution to \eqref{eq:loc-fa}, an element-wise 
compensation term $c_{\vz}$ is added on the right hand side of the divergence 
equation in \eqref{eq:loc-fa}. Notice that the normal fluxes are kept unchanged so 
that the final recovered flux can still fulfill the $\vhdiv$-continuity condition of 
the space in \eqref{eq:space-duality}. The $c_{\vz}$ is defined as a constant on 
this 
vertex patch $\omega_{\vz}$ enforcing the 
compatibility condition for \eqref{eq:loc-fa}:
\begin{equation}
\label{eq:loc-compatible}
\sum_{K\subset \omega_{\vz}}\binprod{\bar{r}_{K,\vz} +c_{\vz}}{1}_{K} -\sum_{F\in 
\Fz} 
\binprod{\bar{j}_{F,\vz}}{1}_{F} =0,
\end{equation}
which, together with \eqref{eq:est-rz}, yields for an interior vertex $\vz$
\begin{equation}
\label{eq:loc-c1}
\begin{aligned}
c_{\vz} := & \frac{1}{|\omega_{\vz}|}\left(\sum_{F\in 
\Fz}\binprod{\bar{j}_{F,\vz}}{1}_{F} 
- \sum_{K\subset \omega_{\vz}}\binprod{\bar{r}_{K,\vz} }{1}_{K} \right)
\\
= & \frac{1}{|\omega_{\vz}|}\left(\sum_{F\in \Fz}\binprod{{j}_{F}}{\phi_{\vz}}_{F} 
- \sum_{K\subset \omega_{\vz}}\binprod{{r}_{K} }{\phi_{\vz}}_{K} \right)
\\
= &
\frac{1}{|\omega_{\vz}|}\binprod{A \nabla (u - \uiter) }{\nabla 
\phi_{\vz}}_{\omega_{\vz}}.
\end{aligned}
\end{equation}
With $c_{\vz}$, the solution to \eqref{eq:loc-fa} exists since the compatibility 
condition \eqref{eq:loc-compatible} is met (see \cite{Braess06eqres,Cai12eqrobust}). 
We note that if $\uiter$ solves \eqref{eq:pb-pd} exactly, 
i.e., $\uiter = u_{_\cT}$, then $c_{\vz} = 0$ for an interior vertex by 
\eqref{eq:loc-c1}, and this is a consequence of the Galerkin 
orthogonality.

In the case that $\uiter$ is not an exact solution to problem \eqref{eq:pb-pd}, 
we emphasize again that problem \eqref{eq:loc-fa} is not solvable without the 
presence of $c_{\vz}$. The Galerkin orthogonality,  which occurs as the 
compatibility condition for \eqref{eq:loc-fa} if $\vz\not\in \p \Omega$, is violated 
if $\uiter$ is not the exact finite element approximation. 

We also note that if 
$\vz\in \p \Omega$, the Galerkin orthogonality does not hold either, $\binprod{A 
\nabla u_{_\cT} }{\nabla \phi_{\vz}}  \neq \binprod{f}{\phi_{\vz}} =\binprod{A 
\nabla u}{\nabla \phi_{\vz}}$, since the nodal basis 
$\phi_{\vz}$ is not in the test function space for the discretized problem in 
\eqref{eq:pb-pd}. A direct usage of \eqref{eq:loc-c1} implies $c_{\vz}\neq 0$, yet, 
the degrees of freedom for $\bsigtz$ on the faces on $\p\omega_{\vz}\cap\p\Omega$ 
are 
treated as unknowns in \eqref{eq:loc-fm}, and $c_{\vz}$ is not needed in 
\eqref{eq:loc-fa} on a boundary vertex $\vz\in \p\Omega$.


The flux correction is postprocessed by a minimization procedure locally on 
$\omega_{\vz}$:
\begin{equation}
\label{eq:loc-fm}
\norm{A^{-1/2}\bsigtz}_{0,\omega_{\vz}} = 
\min_{\btau \in \SigmaTz} \norm{A^{-1/2}\btau}_{0,\omega_{\vz}},
\end{equation}
where $\SigmaTz:= \bigl\{\btau\in \RT^0_{-1,\omega_{\vz}}: \, \btau \text{ satisfies 
\eqref{eq:loc-fa}}  \bigr\}$. The element-wise and the global flux corrections 
are then:
\begin{equation}
\label{eq:loc-fd}
\bsigtk: =\sum_{\vz\in \cN_K}\bsigtz 
\quad \text{ and }\quad  
\bsigt := \sum_{\vz\in \cN}\bsigtz.
\end{equation}

Lastly, a compensatory flux $\bsigc$, which is in the globally $\vhdiv$-conforming 
$\RT^0$ space, is then sought using $c_{\vz}$ defined in \eqref{eq:loc-c1} as data:
\begin{equation}
\label{eq:loc-fc}
\divv \bsigc = -\sum_{\vz\in \cN_K} c_{\vz},
 \quad\text{ in any }K\in \cT,
\end{equation}
By the surjectivity 
of the divergence operator from $\RT^0$ to $\cS^0$, the above 
problem has a solution (e.g., \cite{Brezzi-Fortin,Chen2009convergence}). If 
$\bsigc$ is sought by minimizing a weighted $L^2$-norm, with \eqref{eq:loc-fc} being 
a constraint, then it is equivalent to seeking the solution to a mixed finite 
element approximation problem in the $\RT^0$--$\cS^0$ pair.  The energy 
estimate in a weighted $L^2$-norm for $\bsigc$, which bridges it with the algebraic 
error, will be shown later in Lemma \ref{lem:loc-f}.

The recovered flux based on the $\uiter$ is defined as:
\begin{equation}
\label{eq:rec-s}
\bsig_{_\cT}:= - A\nabla \uiter+ \bsigt + \bsigc.
\end{equation}
In practice, only $\bsigt$ is explicitly computed. For explicit local constructions 
of $\bsigt$, we refer the readers to \cite{Cai12eqrobust,Braess06eqres}. 
The $\bsigc$ is here to compensate the change in divergence caused by the correction 
term $c_{\vz}$, and is not needed, nor explicitly computed for the estimator defined 
in \eqref{eq:est-eta}. 

\begin{lemma}
\label{lem:rec-s}
The recovered flux $\bsig_{_\cT} $ is in the conforming finite element subspace of 
the duality space: $\bsig_{_\cT} \in\bm{\Sigma}_{\subcT}:= \bm{\Sigma}\cap \RT^0 $.
\end{lemma}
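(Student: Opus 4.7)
The plan is to verify two properties of $\bsig_{_\cT} := -A\nabla\uiter + \bsigt + \bsigc$: first, that it lies in the $\vhdiv$-conforming space $\RT^0$; second, that its divergence equals $f$ elementwise. Both follow by combining the divergence and normal-jump identities imposed on the local ingredients $\bsigtz$ by \eqref{eq:loc-fa} and on $\bsigc$ by \eqref{eq:loc-fc}, using the partition of unity property and the fact that $A$ and $\uiter$ are piecewise constant and piecewise linear respectively.

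First I will check the piecewise polynomial structure. Since $A|_K$ is constant and $\uiter|_K$ is affine, $-A\nabla \uiter$ is piecewise constant and hence elementwise lies in $\RT^0$. Each local correction $\bsigtz\in\RT^0_{-1,\omz}$ by construction, and $\bsigc$ is globally in $\RT^0$, so $\bsig_{_\cT}$ is a piecewise $\RT^0$ vector field. The $\vhdiv$-conformity then reduces to cancelling normal jumps across every interior face $F$. On such $F$, using \eqref{eq:loc-fa} and the definition of $\bsigt$, I compute
\begin{equation*}
\jump{\bsigt\cdot\vn_F}{F} \;=\; \sum_{\vz\in \cN_F} \jump{\bsigtz\cdot\vn_F}{F} \;=\; \sum_{\vz\in \cN_F} \bar{j}_{F,\vz} \;=\; \sum_{\vz\in \cN_F}\frac{1}{d}\jump{A\nabla\uiter\cdot\vn_F}{F} \;=\; \jump{A\nabla\uiter\cdot\vn_F}{F},
\end{equation*}
because $F$ has exactly $d$ vertices and the contributions from vertices outside $\cN_F$ vanish (the patches do not reach across $F$). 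Thus $-A\nabla\uiter+\bsigt$ has vanishing normal jumps; adding the globally $\vhdiv$-conforming $\bsigc$ preserves conformity, so $\bsig_{_\cT}\in\RT^0\subset\vhdiv$.

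Next I verify $\divv\bsig_{_\cT}=f$ on each $K\in\cT$. Since $A\nabla\uiter$ is piecewise constant, $\divv(-A\nabla\uiter)=0$ elementwise. Summing the divergence equations of \eqref{eq:loc-fa} over the $d+1$ vertices of $K$ and using \eqref{eq:est-rz}, together with $\divv\bsigc=-\sum_{\vz\in\cN_K}c_{\vz}$ from \eqref{eq:loc-fc}, gives
\begin{equation*}
\divv\bsig_{_\cT}\big|_K \;=\; 0 + \sum_{\vz\in\cN_K}\bigl(\bar{r}_{K,\vz}+c_{\vz}\bigr) - \sum_{\vz\in\cN_K} c_{\vz} \;=\; \sum_{\vz\in\cN_K}\frac{f_K}{d+1} \;=\; f_K \;=\; f|_K.
\end{equation*}
For boundary vertices $\vz\in\partial\Omega$ the same identities hold with $c_{\vz}=0$ and the modified boundary conditions \eqref{eq:loc-fbd}, so the computation is unaffected. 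Combining both parts yields $\bsig_{_\cT}\in\bm{\Sigma}\cap\RT^0 = \bm{\Sigma}_{\subcT}$.

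I expect no serious obstacle here: the construction is explicitly tailored so that the compensation $c_{\vz}$ absorbs exactly the divergence surplus that $\bsigc$ is designed to remove, and the factor $1/d$ in \eqref{eq:est-rz} is precisely what is needed for the jump cancellation on interior faces. The one point requiring a little care is the accounting on faces that touch $\partial\Omega$, where one uses \eqref{eq:loc-fbd} to see that the boundary faces contribute no spurious jumps to the global flux.
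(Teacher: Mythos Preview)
Your proof is correct and follows essentially the same approach as the paper's: verify $\divv\bsig_{_\cT}|_K=f_K$ via the cancellation of $c_{\vz}$ between $\bsigt$ and $\bsigc$, and check $\vhdiv$-conformity by showing the normal jumps of $-A\nabla\uiter+\bsigt$ vanish using $\sum_{\vz\in\cN_F}\bar j_{F,\vz}=j_F$. You supply slightly more detail than the paper (the piecewise $\RT^0$ structure, why only $\vz\in\cN_F$ contribute), but the argument is the same.
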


\begin{proof}
Using \eqref{eq:loc-fa} and \eqref{eq:loc-fc}, together with the fact that 
$A \nabla \uiter$ is a constant vector on each element $K$, we have:
\[
\divv \bsig_{_\cT}\at{K}= \divv \bsigt + \divv \bsigc 
= \sum_{\vz\in \cN_K} \bar{r}_{K,\vz} = f_K.
\]
On $F\in \cF$, the continuity of the normal component implies $\bsigc \in 
\vHdiv$ 
\[
\jump{\bsig_{_\cT}\cdot \vn}{F} = \jump{\bsigt\cdot \vn}{F}
-\jump{A \nabla \uiter\cdot \vn}{F} = 
\sum_{\vz\in \cN(F)} \bar{j}_{F,\vz} -  {j}_F = 0.
\]
\end{proof}

\subsection{Discretization error estimator and reliability}

With the recovered flux correction defined in \eqref{eq:loc-fd}, we define 
the discretization error estimator $\eta_d$ as:
\begin{equation}
\label{eq:est-eta}
\eta_{\sub{d,K}} = \big\Vert{A^{-1/2}\bsigtk}\big\Vert_{0,K},
\;\text{ and }\;\eta_d = \big\Vert{A^{-1/2}\bsigt}\big\Vert_{0}.
\end{equation}
The reliability we show in this section is: the total error 
$\norm{u - \uiter}_A$ is bounded by the error estimator $\eta_d$ plus the algebraic 
error. 



In \eqref{eq:loc-c1}, the representation of $c_{\vz}$ uses $u-\uiter$. Nevertheless, 
inserting the Galerkin orthogonality into \eqref{eq:loc-c1}, which reads 
$\binprod{A \nabla (u - u_{_\cT}) }{\nabla \phi_{\vz}}_{\omega_{\vz}}=0$ for any 
interior 
vertex $\vz$, we have
\begin{equation}
\label{eq:est-c2}
c_{\vz}= \frac{1}{|\omega_{\vz}|} 
\binprod{A \nabla (u_{_\cT} - \uiter) }{\nabla \phi_{\vz}}_{\omega_{\vz}}
= \frac{1}{|\omega_{\vz}|} \sum_{K\subset\omega_{\vz}}
\binprod{A \nabla (u_{_\cT} - \uiter) }{\nabla \phi_{\vz}}_{K}.
\end{equation}
Now the compatibility compensation term $c_{\vz}$ 
can be decomposed as follows:
\begin{equation}
\label{eq:loc-c2}
c_{\vz} = \sum_{K\subset \omega_{\vz}}c_{\vz,K}, \;\text{ with }\; c_{\vz,K} := 
\frac{1}{|\omega_{\vz}|}\binprod{A \nabla (u_{_\cT} - \uiter) }{\nabla 
\phi_{\vz}}_{K}.
\end{equation}

\begin{lemma}[Nodal estimate for the compensation term]
\label{lem:loc-r}
For any interior vertex $\vz\in \cN_K$, on $K\subset \omega_{\vz}$, $c_{\vz,K}$ 
satisfies 
the following $L^2$-estimate with $C$ depending on the shape regularity of the patch 
$\omega_{\vz}$:
\begin{equation}
\label{eq:est-c}
h_K  A_{K}^{-1/2}\norm{c_{\vz,K}}_{0,K} \leq C  
 \, \norm{u_{_\cT} - \uiter}_{A,K},
\end{equation}
\end{lemma}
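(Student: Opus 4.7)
The plan is to view the bound as a straightforward consequence of Cauchy--Schwarz together with two elementary facts about the nodal hat function $\phi_{\vz}$ on a shape-regular simplex. Since $c_{\vz,K}$ is defined as the local contribution from $K$ to the cell-wise constant $c_{\vz}$, it is itself a scalar (a constant on $K$), so the $L^2$-norm reduces to $\norm{c_{\vz,K}}_{0,K} = |K|^{1/2}\,|c_{\vz,K}|$. Everything therefore comes down to estimating the single scalar $c_{\vz,K}$ given by formula \eqref{eq:loc-c2}.

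First I would apply the (weighted) Cauchy--Schwarz inequality to the local inner product
\[
c_{\vz,K} = \frac{1}{|\omz|}\,\binprod{A\nabla(u_{_\cT}-\uiter)}{\nabla \phi_{\vz}}_K,
\]
using the fact that $A$ is piecewise constant equal to $A_K$ on $K$. This yields
\[
|c_{\vz,K}| \;\leq\; \frac{A_K^{1/2}}{|\omz|}\,\norm{u_{_\cT}-\uiter}_{A,K}\,\norm{\nabla\phi_{\vz}}_{0,K}.
\]
Next I would invoke the standard shape-regularity estimate $\norm{\nabla\phi_{\vz}}_{0,K}\leq C\, h_K^{-1}|K|^{1/2}$ for the Lagrange nodal basis on a simplex, where $C$ depends only on the shape-regularity parameter of $\cT$.

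Combining these two facts with $\norm{c_{\vz,K}}_{0,K}=|K|^{1/2}|c_{\vz,K}|$ gives
\[
\norm{c_{\vz,K}}_{0,K}\;\leq\; C\,\frac{|K|}{|\omz|}\,A_K^{1/2}\,h_K^{-1}\,\norm{u_{_\cT}-\uiter}_{A,K}.
\]
Multiplying by $h_K A_K^{-1/2}$ and using the trivial bound $|K|/|\omz|\leq 1$ (together with the quasi-uniformity/shape regularity of the patch, so $|\omz|$ is comparable to a constant times $|K|$ with a shape-dependent constant) yields the claimed estimate \eqref{eq:est-c}.

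There is no real obstacle here; the only care required is bookkeeping the shape-regularity constants in the hat-function gradient bound and making the diffusion weight $A_K^{1/2}$ pass through cleanly so that the energy norm $\norm{\cdot}_{A,K}$ appears on the right-hand side. The scaling $h_K\cdot h_K^{-1}=1$ is exactly what makes the final estimate independent of the mesh size, and the factor $|K|/|\omz|\leq 1$ absorbs the localization weight $|\omz|^{-1}$ from the definition of $c_{\vz}$.
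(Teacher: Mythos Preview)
Your proposal is correct and follows essentially the same route as the paper: Cauchy--Schwarz on the local inner product defining $c_{\vz,K}$, the shape-regularity bound on $\norm{\nabla\phi_{\vz}}_{0,K}$, and the observation that $c_{\vz,K}$ is constant on $K$ so its $L^2$-norm is $|K|^{1/2}|c_{\vz,K}|$. The only cosmetic difference is that the paper tracks powers of $h_K$ (writing $\norm{\nabla\phi_{\vz}}_{0,K}\leq Ch_K^{d/2-1}$ and $|\omz|^{-1}\leq Ch_K^{-d}$) whereas you carry the factor $|K|/|\omz|\leq 1$ explicitly; both packagings are equivalent under shape regularity.
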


\begin{proof}
By the representation in \eqref{eq:loc-c2}, it follows from the Cauchy-Schwarz 
inequality, the fact that $\norm{\nabla \phi_{\vz}}_{0,K} \leq C\,h_K^{\frac{d}{2} - 
1} $, and the shape regularity of the patch that
\begin{eqnarray*}
\abs{c_{\vz,K}} 
&=& \frac{1}{|\omega_{\vz}|} 
\abs{\binprod{A \nabla (u - \uiter) }{\nabla \phi_{\vz}}_{K}} 
\leq  \frac{1}{|\omega_{\vz}|} \norm{u - \uiter}_{A,K}\norm{\phi_{\vz}}_{A,K}\\[2mm]
&\leq & C \,h_K^{-\frac{d}{2} - 1} 
A_K^{1/2} \,\norm{u - \uiter}_{A,K}.
\end{eqnarray*}
Since $c_{\vz,K}$ is a constant on $K$, $\norm{c_{\vz}}_{0,K} \leq h_K^{\frac{d}{2}} 
\abs{c_{\vz,K}} $, the validity of \eqref{eq:est-c} is then verified.
\end{proof}

To bridge the energy estimate for $\bsigc$ with the algebraic error, the following 
norms (\cite[Chapter 3 $\S 
5.7$]{Braess-07}) are need: let $A_F := \max_{K\subset \omega_F} A_K$, for $p\in 
\cS^0$, and $f\in L^2(\Omega)$
\begin{equation}
\label{eq:norm-h}
\norm{f}_{-1,h} :=\sup_{q\in \cS^0} \frac{(f,q)}{\norm{q}_{1,h}},
\text{ and } \norm{p}_{1,h}:= \displaystyle 
\left(\sum_{F\in \cF} 
h_F^{-1} A_F\norm{\jump{p}{}}_{0,F}^2\right)^{1/2}.
\end{equation}

\begin{lemma}[A discrete energy estimate for $\bsigc$]
\label{lem:loc-f}
If $\bsigc$ is obtained by 
\begin{equation} 
\label{eq:est-fc}
\big\Vert{A^{-1/2}\bsigc}\big\Vert_{0} = \min_{\substack{\btau\in \RT^0,\\
\divv \,\btau = f^c}}
\big\Vert{A^{-1/2}\btau}\big\Vert_{0}, 
\end{equation}
where $f^c$ is defined as follows on an element $K$ using \eqref{eq:loc-fc}, 
\begin{equation}
\label{eq:fcK}
f^c|_K := -\sum_{\vz\in \cN_K} c_{\vz} 
\end{equation}
then the following estimate holds:
\begin{equation}
\label{eq:est-sigmac}
\big\Vert{A^{-1/2}\bsigc}\big\Vert_{0} \leq \, 
C_A\big\Vert{u_{_\cT} - \uiter}\big\Vert_A,
\end{equation}
in which $C$ depends on the shape regularity of the triangulation, the 
maximum number of elements in each $\omega_K$, and the diffusion 
coefficient $A$. 
\end{lemma}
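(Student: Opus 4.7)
The plan is to treat the constrained minimization in \eqref{eq:est-fc} as the primal half of a mixed $\RT^0$--$\cS^0$ saddle-point problem, to reduce the estimate for $\norm{A^{-1/2}\bsigc}_0$ to a bound on $f^c$ in the discrete dual norm $\norm{\cdot}_{-1,h}$, and then to exploit the partition-of-unity identity $\sum_{\vz\in\cN_K}\nabla\phi_\vz=0$ together with the representation \eqref{eq:est-c2} of $c_\vz$ to transfer that dual bound onto the algebraic error $\norm{u_{_\cT}-\uiter}_A$.

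First I would write the Lagrangian optimality system for \eqref{eq:est-fc}: there exists a multiplier $q\in\cS^0$ such that $(A^{-1}\bsigc,\btau)+(q,\divv\btau)=0$ for every $\btau\in\RT^0$ and $\divv\bsigc=f^c$. Testing with $\btau=\bsigc$ gives the energy identity $\norm{A^{-1/2}\bsigc}_0^2 = -(q,f^c)$, so the task becomes controlling $(q,f^c)$. The first KKT equation alone yields $\sup_{\btau\in\RT^0}|(q,\divv\btau)|/\norm{A^{-1/2}\btau}_0 \leq \norm{A^{-1/2}\bsigc}_0$, and a discrete inf-sup for the $\RT^0$--$\cS^0$ pair in the weighted norms (realized by the test flux whose nonzero face degree of freedom on $F$ is $A_F h_F^{-1}\jump{q}{F}$) then gives $\norm{q}_{1,h}\leq C_A\norm{A^{-1/2}\bsigc}_0$. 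Combined with the trivial $|(q,f^c)|\leq \norm{q}_{1,h}\,\norm{f^c}_{-1,h}$, this reduces \eqref{eq:est-sigmac} to the claim $\norm{f^c}_{-1,h}\leq C_A\,\norm{u_{_\cT}-\uiter}_A$.

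To estimate $\norm{f^c}_{-1,h}$, I would take an arbitrary $q\in\cS^0$ and re-index by vertex. Writing $\bar{q}_\vz := \frac{1}{|\omz|}\sum_{K\subset\omz}q|_K\,|K|$ for the patch average, the definition \eqref{eq:fcK} yields $(f^c,q)=-\sum_\vz c_\vz\,\bar{q}_\vz\,|\omz|$. Substituting the Galerkin-orthogonal representation \eqref{eq:est-c2} of $c_\vz$ and swapping the vertex and element sums gives
\begin{equation*}
(f^c,q) \;=\; -\sum_{K\in\cT}\binprod{A\nabla(u_{_\cT}-\uiter)}{\sum_{\vz\in\cN_K}\bar{q}_\vz\,\nabla\phi_\vz}_K.
\end{equation*}
Because $\sum_{\vz\in\cN_K}\nabla\phi_\vz=0$ on each $K$, one may replace $\bar{q}_\vz$ by $\bar{q}_\vz - q|_K$ inside the inner sum; this is the key identity that makes the whole estimate scale with the piecewise variation of $q$ rather than with $q$ itself.

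The last step is a patch-wise Poincaré--Friedrichs inequality for piecewise constants which controls the deviation by the interior jumps: $|\bar{q}_\vz - q|_K|^{2}\lesssim h_K^{-d}\sum_{F\in\Fz}h_F\,\norm{\jump{q}{F}}_{0,F}^2$. Combining this with $\norm{\nabla\phi_\vz}_{0,K}\lesssim h_K^{d/2-1}$ and the local domination $A_K\leq A_F$ for $K\subset\om_F$, and applying Cauchy--Schwarz first on each element and then in the element sum (using shape regularity to bound the face multiplicity introduced by the vertex-patch double counting), yields $|(f^c,q)|\leq C_A\norm{u_{_\cT}-\uiter}_A\,\norm{q}_{1,h}$, hence $\norm{f^c}_{-1,h}\leq C_A\norm{u_{_\cT}-\uiter}_A$, which together with the reduction of the second paragraph establishes \eqref{eq:est-sigmac}. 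The main obstacle I anticipate is the inf-sup step: one must verify the weighted $\RT^0$--$\cS^0$ stability with a constant $C_A$ that depends only on the local distribution of $A$ and on shape regularity, so that no global coefficient ratio intrudes into the final bound. The patch Poincaré constant and the shape-regular face multiplicity are by now standard, so once the weighted inf-sup is in hand the remaining calculation is routine.
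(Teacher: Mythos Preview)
Your proof is correct and follows essentially the same route as the paper's: both reduce \eqref{eq:est-sigmac} to the dual bound $\norm{f^c}_{-1,h}\le C_A\norm{u_{_\cT}-\uiter}_A$ via the $\RT^0$--$\cS^0$ saddle-point structure and the associated inf-sup, and both exploit the identity $\sum_{\vz\in\cN_K}\nabla\phi_\vz=0$ (equivalently $\sum_{\vz\in\cN_K}c_{\vz,K}\,|\omz|=0$) to replace the vertex-patch averages of $q$ by their deviation from a constant, which is then controlled by inter-element jumps. The only difference is cosmetic: the paper first splits $c_\vz=\sum_{T\subset\omz}c_{\vz,T}$, performs a double summation swap, and subtracts the larger element-patch average $m_K=|\om_K|^{-1}\sum_{P\subset\om_K}q_P|P|$ before invoking Lemma~\ref{lem:loc-r}, whereas you subtract $q|_K$ directly after a single swap---a mild bookkeeping simplification that localizes the Poincar\'e step to $\omz$ rather than $\om_K$ but changes nothing of substance.
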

\begin{proof}
The minimizer of problem \eqref{eq:est-fc} satisfies the following global mixed 
problem: find $(\bsigc, p)\in \RT^0\times \cS^0$
\begin{equation}
\label{eq:loc-mixed}
\left\{
\begin{aligned}
&\binprod{A^{-1} \bsigc}{\btau} - \binprod{p}{\divv \btau} = 0, 
\quad \forall\; \btau \in\RT^0,
\\[2pt]
&\binprod{\divv \bsigc}{q} = \binprod{f^c}{q}, \quad \forall \;q\in \cS^0.
\end{aligned}
\right.
\end{equation}
By the inf-sup stability of discrete 
$H^1$-$\vL^2$ analysis of the mixed problem when the shape regularity of the mesh is 
assumed ($ h_F \eqsim h_K$ for $F$'s neighboring elements) (\cite[Chapter 3 $\S 
5.7$]{Braess-07}), 
problem \eqref{eq:loc-mixed} has a unique solution satisfying the 
following energy estimate: letting $\btau = \bsigc$, $q = p$, we have
\begin{equation}
\begin{aligned}
&\big\Vert{A^{-1/2}\bsigc}\big\Vert_{0}^2\leq \norm{f^c}_{-1,h} \norm{p}_{1,h}
\leq \norm{f^c}_{-1,h} \sup_{\btau\in \RT^0 } 
\frac{(p,\nabla\cdot\btau)}{\norm{A^{-1/2}\btau}_{0}}
\\
= &  \norm{f^c}_{-1,h} \sup_{\btau\in \RT^0 } 
\frac{\binprod{A^{-1} \bsigc}{\btau}}{\norm{A^{-1/2}\btau}_{0}} 
\leq 
\norm{f^c}_{-1,h} \big\Vert{A^{-1/2}\bsigc}\big\Vert_{0}.
\end{aligned}
\end{equation}
Now, to prove the validity of the lemma, by \eqref{eq:norm-h}, it suffices to show 
that for $q\in \cS^0$
\begin{equation}
(f^c,q) \leq C\norm{u_{_\cT} - \uiter}_A \norm{q}_{1,h}.
\end{equation}
To this end, first denote $q_K:=q|_K$, and $f_c$ is written out explicitly using 
\eqref{eq:fcK}, 
\begin{equation}
(f^c,q)  = -\sum_{K\in \cT}\left( \sum_{\vz\in \cN_K} 
c_{\vz} ,q\right)_{K}
= -\sum_{K\in \cT}\sum_{\vz\in \cN_K} c_{\vz} q_K|K|.
\end{equation}
Using $c_{\vz} = \sum_{K\subset \omega_{\vz}}c_{\vz,K}$ in \eqref{eq:loc-c2} for 
interior 
vertices and $c_{\vz} =0 $ for $\vz\in \p\Omega$ yields,
\begin{equation}
\label{eq:sum-1}
(f^c,q) = -\sum_{K\in \cT}\sum_{\vz\in \cN_K,\vz\not\in \partial\Omega} 
\left( \sum_{T\subset\omega_{\vz}} c_{\vz,T}\right)q_K|K|.
\end{equation}
We switch the order of the summation, by summing up the 
inner terms $c_{\vz,T}$ last, then the above equation becomes
\begin{equation}
\label{eq:sum-2}
\begin{aligned}
& -\sum_{K\in \cT}\sum_{\vz\in \cN_K,\vz\not\in \partial\Omega} 
\left( \sum_{T\subset\omega_{\vz}} c_{\vz,T}\right)q_K|K|
\\
=& \, -\sum_{K\in \cT} \sum_{\vz\in \cN_{K},\vz\not\in \partial\Omega}
\left\{   c_{\vz,K} \Big(\sum_{T\subset \omega_{\vz}} q_T|T| \Big)
\right\} =:-{(*)} ,
\end{aligned}
\end{equation}
in which for each vertex $\vz\in \cN_{K}$, the term $c_{\vz,K}$ is only summed 
against $q_T|T|$ for $T\subset \omega_{\vz}$. The reason is that among the terms in 
the 
original summation in \eqref{eq:sum-1}, a term involving $c_{\vz,T}$ is summed up 
multiplying $q_K|K|$ only when $\omega_{\vz}\subset \omega_K$. 

Now on each $K$ not touching $\partial \Omega$, we have the following weighted 
average 
of $c_{\vz,K}$, using $|\omega_{\vz}| m_K$ as weights, being zero for any $m_{K}$ 
that is a constant on the patch $\omega_K$:
\begin{equation}
\sum_{\vz\in \cN_{K},\vz\not\in \partial\Omega}  c_{\vz,K}  (|\omega_{\vz}| m_K )= 
m_K\sum_{\vz\in \cN_{K},\vz\not\in \partial\Omega}
\binprod{A \nabla (u_{_\cT} - \uiter) }{\nabla \phi_{\vz}}_{K} = 0.
\end{equation}
As a result, $|\omega_{\vz}| m_K$ can be inserted into \eqref{eq:sum-2}, and $m_K$ 
is chosen 
as the average of $q$ on $\omega_K$, i.e., $m_K:= (\sum_{P\subset \omega_K} q_{P}|P| 
)/|\omega_K|$, thus $(*)$ in \eqref{eq:sum-2} becomes
\begin{equation}
\label{eq:sum-3}
\begin{aligned}
&\sum_{K\in \cT} \sum_{\vz\in \cN_{K},\vz\not\in \partial\Omega}
\left\{  c_{\vz,K} \Big(\sum_{T\subset \omega_{\vz}} q_T|T|
-\frac{|\omega_{\vz}|}{|\omega_K|}\sum_{P\subset \omega_K} q_P|P| \Big)
\right\} 
\\
=& \sum_{K\in \cT} \sum_{\vz\in \cN_{K},\vz\not\in \partial\Omega}
\left\{ c_{\vz,K} \sum_{T\subset \omega_{\vz}}\Big( \frac{|T|}{|\omega_K|} 
\sum_{P\subset \omega_K}  (q_T -q_P)|P|  \Big)
\right\}  =: \sum_{K\in \cT} \sum_{\vz\in \cN_{K},\vz\not\in \partial\Omega} 
\beta_{\vz,K}.
\end{aligned}
\end{equation}
For any $T\subset \omega_{\vz}$, if $T$ and $P\subset \omega_K$ 
have a common face $F=\partial T\cap \partial P$,
$|q_T -q_P| = |\jump{q}{F}|$ on $F$; 
otherwise, there always exists a path consisting of 
finite many elements 
$K_i\subset \omega_K$ ($i=1,\dots, n_{TP}$) starting from $K_1:= T$ to 
$K_{n_{TP}}:=P$, 
such that $K_i$ and $K_{i-1}$ share a face $F_i$, then 
\begin{equation}
|q_T -q_P| = \left|q_{K_1} -q_{K_2}  + q_{K_2} -q_{K_3} \dots \right|
\leq \sum_{i=1}^{n_{TP}}\abs{\jump{q}{F_i}} \leq \sum_{F\in \cF_{\omega_K}} 
\abs{\jump{q}{F}}.
\end{equation} 
Applying above on the innermost summation for $P$ of 
\eqref{eq:sum-3}, exploiting the local shape regularity on every element in 
$\omega_K$, and using the fact that $c_{\vz,K}$ and $\jump{q}{F}$ are constants on 
$K$ 
and $F$, respectively, yields:
\begin{equation}
\begin{aligned}
\beta_{\vz,K} 
& \leq |c_{\vz,K}|\abs{\sum_{T\subset \omega_{\vz}}\Big( \frac{|T|}{|\omega_K|} 
\sum_{P\subset \omega_K}  (q_T -q_P)|P| \Big)}
\leq |c_{\vz,K}| \left(\sum_{T\subset \omega_{\vz}} |T| \sum_{F\in \cF_{\omega_K}} 
\abs{\jump{q}{F}}\right)
\\
&\leq C   A_K^{-1/2}h_K\norm{c_{\vz,K}}_{0,K} \cdot 
A_K^{1/2}h_K^{-1}|K|^{1/2}\left(\sum_{F\in \cF_{\omega_K}} 
\norm{\jump{q}{F}}_{0,F} |F|^{-1/2}\right).
\end{aligned}
\end{equation}

Using the Cauchy-Schwarz inequality and the shape regularity of the 
triangulation, $(*)$ can be estimated as follows:
\begin{equation}
\begin{aligned}
(*) \leq C & \left(\sum_{K\in \cT} \sum_{\vz\in \cN_{K},\vz\not\in \partial\Omega}
A_K^{-1}h_K^2\norm{c_{\vz,K}}_{0,K}^2\right)^{1/2} 
\\
&\left(\sum_{K\in \cT} \sum_{\vz\in \cN_{K},\vz\not\in \partial\Omega}
A_K \sum_{F\in \cF_{\omega_K}} h_F^{-1}\norm{\jump{q}{F}}_{0,F}^2\right)^{1/2}.
\end{aligned}
\end{equation}
Finally, the lemma follows from Lemma \ref{lem:loc-r} and definition 
\eqref{eq:norm-h}.
\end{proof}

\begin{theorem} 
\label{thm:r}
There exists a positive constant $C_A$, depending on the shape regularity of the mesh and the coefficient $A$,
such that
\begin{equation}
\label{eq:est-r}
\norm{u-\uiter}_A \leq \eta_d + C_A \norm{u_{_\cT} - \uiter}_{A}.
\end{equation}
\end{theorem}

\begin{proof}
The proof of \eqref{eq:est-r} starts from \eqref{eq:est-u}
\[
\norm{u-\uiter}_A ^2 
\leq 2\Big( \cJ(\uiter) - \cJ^*(\bsig_{_\cT}) \Big) =   \norm{A^{1/2}\nabla \uiter }_0^2 
- 2\binprod{f}{\uiter}
+ \binprod{A^{-1} \bsig_{_\cT}}{\bsig_{_\cT}}.
\]
With $\bsig_{_\cT}=-A \nabla \uiter + \bsigt + \bsigc$ defined in \eqref{eq:rec-s}, we have
\[
 \binprod{A^{-1} \bsig_{_\cT}}{\bsig_{_\cT}} 
= \norm{A^{-1/2}(\bsigt + \bsigc)}_0^2 
- 2 \binprod{\bsig_{_\cT}}{\nabla \uiter} - \norm{A^{1/2}\nabla \uiter }_0^2,
\]
which, together with the above inequality, implies
 \begin{eqnarray*}
 \norm{u-\uiter}_A ^2
& \leq & \norm{A^{-1/2}(\bsigt + \bsigc)}_0^2  -2\binprod{\bsig_{_\cT}}{\nabla \uiter} - 2\binprod{f}{\uiter} \\[2mm]
 &=&  \norm{A^{-1/2}(\bsigt + \bsigc)}_0^2.
 \end{eqnarray*}
The last equality uses the fact that $\binprod{\bsig_{_\cT}}{\nabla \uiter} + 
\binprod{f}{\uiter}=0$, which follows
from integration by parts element-wise and Lemma \ref{lem:rec-s}. 
By the triangle inequality, we have
\[
\norm{u-\uiter}_A 
 \leq \norm{A^{-1/2}\bsigt }_0 + \norm{A^{-1/2}\bsigc}_0
=\eta_d + \norm{A^{-1/2}\bsigc}_0.
\]
Now, the theorem simply follows from estimate \eqref{eq:est-sigmac} in 
Lemma \ref{lem:loc-f}.
\end{proof}

\section{Algebraic error estimator}

The upper bound in \eqref{eq:est-r} contains the algebraic error 
$\norm{u_{_\cT} - \uiter}_{A}$. This section introduces an algebraic error estimator in terms of
the energy norm of two consecutive iterates with a constant depending on an 
approximation of the spectral radius of the error propagation matrix.

Recall the stiffness matrix $\bA$ introduced in Section 2 and the iteration 
in \eqref{eq:pb-it}.
Denote the algebraic iteration error at the $k$-th iteration by
\begin{equation}
\label{eq:err-a}
\mathbf{e}^{(k)} : = \bu_{_\cT} - \bu_{_\cT}^{(k)},
\end{equation}
then the error propagation can be verified to be:
\begin{equation}
\label{eq:err-a1}
\mathbf{e}^{(k+1)} = (\bI - \bB \bA)\mathbf{e}^{(k)}.
\end{equation}
Let $e^{(k)}$ be the function in the finite element space having $\mathbf{e}^{(k)}$ 
as its 
vector representation in the nodal basis. Define the spectral radius of the error 
propagation matrix $\bI - \bB \bA$ as 
$\rho_{\mathrm{err}}$:
\begin{equation}
\rho_{\mathrm{err}} : = \rho(\bI - \bB \bA) = \norm{\bI - \bB \bA}_2.
\end{equation}

\begin{theorem}[Upper bound of the algebraic error]
\label{thm:ra}
Let $\{\bu^{(k)}\}$ be the sequence generated by \eqref{eq:pb-it}, 
then the algebraic error $\mathbf{e}^{(k)}$ defined in \eqref{eq:err-a} satisfies
the following estimate:
\begin{equation}
\label{eq:est-au}
\big\Vert{\mathbf{e}^{(k+1)}}\big\Vert_{\bA}
\leq \frac{\rho_{\mathrm{err}}}{1-\rho_{\mathrm{err}}} 
\big\Vert{\bu^{(k+1)} - \bu^{(k)}}\big\Vert_{\bA}, 
\end{equation}
or in the finite element function form:
\begin{equation}
\big\Vert{u_{_\cT} -  u_{_\cT}^{(k+1)}}\big\Vert_A \leq 
\frac{\rho_{\mathrm{err}}}{1-\rho_{\mathrm{err}}} 
\big\Vert{u_{_\cT}^{(k+1)}-  u_{_\cT}^{(k)}}\big\Vert_{A}.
\end{equation}
\end{theorem}

\begin{proof}
By the norm equivalence in \eqref{eq:nm-ma} and the fact that $(\bI -\bA^{1/2}
\bB \bA^{-1/2})$ is similar to $(\bI - \bB \bA)$ (they have the same
eigenvalues), we have
\begin{equation}
\norm{\bI - \bB \bA}_{\bA} = 
\big\Vert{\bA^{1/2}(\bI - \bB \bA)\bA^{-1/2}}\big\Vert_{2}
= \rho\big(\bI -\bA^{1/2} \bB \bA^{-1/2}\big) = \rho_{\mathrm{err}} .
\end{equation}
Hence, $\norm{\mathbf{e}^{(k+1)}}_{\bA} \leq \rho_{\mathrm{err}}
\norm{\mathbf{e}^{(k)}}_{\bA} $, and the result follows from a standard
contraction mapping convergence theorem (see, e.g., \cite[Theorem 
12.1.2]{Ortega-Rheinboldt}).
\end{proof}

In Theorem \ref{thm:ra}, $\rho_{\mathrm{err}}$ is the true rate of convergence of 
the solver. However, in practice, $\rho_{\mathrm{err}}$ is not available during 
any iteration of the solver, unless an eigenvalue problem is solved for the 
error propagation matrix $\bI - \bB \bA$. What we have access to is the following 
quantity:
\begin{equation}
\rho_{\mathrm{err}}^{(k)} := 
\frac{\norm{\mathbf{r}_{k}}_2}{\norm{\mathbf{r}_{k-1}}_2},
\end{equation} 
where $\mathbf{r}_k:= \bA \mathbf{e}^{(k)} $ with $j$-th entry given by
$(f,\phi_{\vz_j}) - \binprod{A \nabla u_{_\cT}^{(k)}}{\nabla \phi_{\vz_j}}$. The 
following lemma describes the convergence of $\rho_{\mathrm{err}}^{(k)}$ provided that 
the iterative 
solver is convergent.

\begin{lemma}[Convergence of $\rho_{\mathrm{err}}^{(k)}$]
\label{lem:rho-c}
Assuming the error propagation matrix $\bI - \bB \bA$ has eigenvalues $1> 
\rho_{\mathrm{err}} = 
\lambda_1 \geq \lambda_2 \geq \cdots \geq \lambda_N > 0$, then 
$\rho_{\mathrm{err}}^{(k)} \to 
\rho_{\mathrm{err}}$ as $k\to \infty$.
\end{lemma}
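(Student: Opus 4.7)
The plan is to reduce the claim to a standard power-method style argument on the residual vector. First I would relate $\br_k$ to an iterated application of a fixed non-symmetric matrix: since $\br_k = \bA\be^{(k)}$ and $\be^{(k)} = (\bI-\bB\bA)\be^{(k-1)}$, one checks the identity $\bA(\bI-\bB\bA) = (\bI-\bA\bB)\bA$, which gives the clean recursion $\br_k = (\bI-\bA\bB)\br_{k-1}$, hence $\br_k = \bN^k \br_0$ with $\bN := \bI - \bA\bB$.

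Next, I would import the spectral information from the excerpt. As shown in the proof of Theorem~\ref{thm:ra}, the symmetrized matrix $\bM := \bA^{1/2}\bB\bA^{1/2}$ satisfies $\bA^{1/2}(\bI-\bB\bA)\bA^{-1/2} = \bI-\bM$, and since $\bB$ is symmetric so is $\bI-\bM$. The same similarity transform, applied on the other side, yields $\bA^{-1/2}(\bI-\bA\bB)\bA^{1/2} = \bI - \bM$, so $\bN$ is similar to the symmetric matrix $\bI-\bM$ with identical eigenvalues $\lambda_1 \geq \cdots \geq \lambda_N$. Let $\{\bv_i\}$ be an $\ell^2$-orthonormal eigenbasis of $\bI-\bM$ and set $\tilde{\bv}_i := \bA^{1/2}\bv_i$; then $\bN\tilde{\bv}_i = \lambda_i\tilde{\bv}_i$ and $\{\tilde{\bv}_i\}$ forms a (non-orthonormal but linearly independent) basis of $\RR^N$.

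Now the standard power-method expansion takes over. Write $\br_0 = \sum_i d_i\tilde{\bv}_i$ and group the eigenvalues equal to $\lambda_1 = \rho_{\mathrm{err}}$; let $\bP_1 \br_0$ denote the partial sum over the top eigenspace. Then
\begin{equation*}
\br_k = \lambda_1^k \bP_1\br_0 + \sum_{i:\,\lambda_i < \lambda_1} d_i \lambda_i^k \tilde{\bv}_i
= \lambda_1^k\left(\bP_1\br_0 + \bfeta_k\right),
\end{equation*}
where $\bfeta_k \to \bzero$ geometrically at rate $\lambda_2/\lambda_1 < 1$ (using only the largest subdominant eigenvalue strictly less than $\lambda_1$). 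Taking Euclidean norms, $\|\br_k\|_2 = \lambda_1^k\|\bP_1\br_0\|_2(1+o(1))$, and the ratio telescopes:
\begin{equation*}
\rho_{\mathrm{err}}^{(k)} = \frac{\|\br_k\|_2}{\|\br_{k-1}\|_2}
= \lambda_1\cdot\frac{1+o(1)}{1+o(1)} \longrightarrow \lambda_1 = \rho_{\mathrm{err}}.
\end{equation*}

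The one genuine obstacle is the degenerate case $\bP_1\br_0 = \bzero$, in which the expansion is driven by the next eigenvalue and $\rho_{\mathrm{err}}^{(k)}$ converges to that smaller value instead. This is the familiar caveat of the power method, and the cleanest fix is to assume that $\br_0$ (equivalently $\be^{(0)}$) has a nontrivial component in the dominant eigenspace of $\bN$ — a generic condition that is essentially always satisfied in practice through round-off, and which should be stated explicitly either in the hypotheses or as a remark accompanying the lemma. With this caveat, everything else is a direct consequence of the similarity argument and the geometric decay of the subdominant modes.
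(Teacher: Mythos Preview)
Your argument is correct and follows essentially the same route as the paper: derive $\br_k=(\bI-\bA\bB)^k\br_0$, expand in an eigenbasis of this (diagonalizable) matrix, and read off the ratio asymptotics. The only notable difference is that the paper, assuming the eigenvectors are $\ell^2$-orthonormal, writes down the explicit closed form $\rho_{\mathrm{err}}^{(k)}=\lambda_1\bigl(1+\sum_{i\ge 2} b_i\gamma_i^{k}\bigr)\big/\bigl(1+\sum_{i\ge 2} b_i\gamma_i^{k-1}\bigr)$ with $b_i=(c_i/c_1)^2$, $\gamma_i=(\lambda_i/\lambda_1)^2$ (its implicit $c_1\neq 0$ is exactly your $\bP_1\br_0\neq 0$), and this formula is reused in the subsequent monotonicity lemma and reliability theorem; your purely asymptotic version suffices for the present lemma but would need to be sharpened for those later results.
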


\begin{proof}
First notice that, by applying \eqref{eq:err-a1} from $0$ to $k$ in a cascading 
fashion, 
\[
\mathbf{r}_k= \bA \mathbf{e}^{(k)}  = \bA (\bI - \bB\bA)^k \mathbf{e}^{(0)}  
= (\bI - \bA\bB)^k \bA \mathbf{e}^{(0)}  =
(\bI - \bA\bB)^k \mathbf{r}_0.
\]
Since $\bA^{-1}(\bI - \bA\bB)\bA = \bI - \bB\bA$, $\bI - \bA\bB$ and $\bI - \bB\bA$
share the same eigenvalues and eigenvectors.
Suppose that $\{ \bv_i\}_{i=1}^N$ are the set of orthonormal eigenvectors in the
$\ell^2$-sense corresponding to the eigenvalue set $\{\lambda_i \}_{i=1}^N$. Let 
$c_i = \mathbf{r}_0 \cdot \bv_i$ be the coefficient of the eigen-expansion of $\mathbf{r}_0$. 
Without 
loss of generality, assume the multiplicity of the largest eigenvalue $\lambda_1$ is 
1. Then we have:
\begin{equation}
\label{eq:rho-c1}
\begin{aligned}
\rho_{\mathrm{err}}^{(k)} & = 
\frac{\norm{(\bI-\bA\bB)^{k}\mathbf{r}_0}_2}{\norm{(\bI-\bA\bB)^{k-1}\mathbf{r}_0 
}_2}
= \frac{\norm{\sum_{i=1}^{N}\lambda_i^{k} c_i \bv_i}_2}
{\norm{\sum_{i=1}^{N}\lambda_i^{k-1} c_i \bv_i}_2}
\\
& = \lambda_1
\frac{\norm{\sum_{i=1}^{N}
\left(\frac{\lambda_i}{\lambda_1}\right)^{k} c_i \bv_i}_2}
{\norm{\sum_{i=1}^{N}
\left(\frac{\lambda_i}{\lambda_1}\right)^{k-1} c_i \bv_i}_2}
= \lambda_1
\frac{1+ \sum_{i=2}^{N} b_i \gamma_i^{k} } {1+ \sum_{i=2}^{N} b_i \gamma_i^{k-1} },
\end{aligned}
\end{equation}
where $b_i := (c_i/c_1)^2$, and $\gamma_i:= (\lambda_i/\lambda_1)^2$. 
The lemma follows from letting $k\to \infty$. When the multiplicity of $\lambda_1$ 
is $m\geq 2$, factoring out the first $m$ terms and $i$ starts from $(m+1)$ in the 
eigen-expansion in \eqref{eq:rho-c1} yields the same result.
\end{proof}

\begin{lemma}[Monotonicity of $\rho_{\mathrm{err}}^{(k)}$]
\label{lem:rho-m}
Under the same assumption as in Lemma \ref{lem:rho-c},  
$\rho_{\mathrm{err}}^{(k)} \leq \rho_{\mathrm{err}}^{(k+1)}$, for any fixed $k \in 
\ZZ^+$.
\end{lemma}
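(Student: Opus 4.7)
The plan is to leverage the explicit eigen-expansion derived in the proof of Lemma~\ref{lem:rho-c} and reduce the monotonicity claim to a single Cauchy--Schwarz inequality.

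Reusing the notation $b_i := (c_i/c_1)^2 \geq 0$ and $\gamma_i := (\lambda_i/\lambda_1)^2 \in (0,1)$ for $i \geq 2$, set $S_m := \sum_{i=2}^N b_i \gamma_i^m$. Lemma~\ref{lem:rho-c} then gives (in the simple-multiplicity case of $\lambda_1$)
\[
\rho_{\mathrm{err}}^{(k)} \;=\; \lambda_1 \, \frac{1+S_k}{1+S_{k-1}}.
\]
Since all quantities in sight are positive, cross-multiplication shows that $\rho_{\mathrm{err}}^{(k)} \leq \rho_{\mathrm{err}}^{(k+1)}$ is equivalent to the algebraic inequality
\[
(1+S_k)^2 \;\leq\; (1+S_{k-1})(1+S_{k+1}).
\]

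To prove this inequality, I would introduce two $\ell^2$-vectors $\vu$ and $\vv$ whose zeroth entries equal $1$ and whose $i$-th entries (for $i \geq 2$) are $\sqrt{b_i \gamma_i^{k-1}}$ and $\sqrt{b_i \gamma_i^{k+1}}$, respectively. These are well-defined because $b_i \geq 0$ and $\gamma_i > 0$, and a direct computation yields $\vu \cdot \vv = 1 + S_k$, $\|\vu\|_2^2 = 1 + S_{k-1}$, and $\|\vv\|_2^2 = 1 + S_{k+1}$. The standard Cauchy--Schwarz bound $(\vu \cdot \vv)^2 \leq \|\vu\|_2^2 \|\vv\|_2^2$ is then precisely the required estimate.

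The case in which $\lambda_1$ has multiplicity $m \geq 2$ is handled by the same factoring trick used at the end of the proof of Lemma~\ref{lem:rho-c}: the summation range shifts to $i \geq m+1$, but the functional form of the ratio is unchanged, so the Cauchy--Schwarz argument applies verbatim. In the degenerate situation where $\br_0$ lies entirely in the top eigenspace, all $b_i$ vanish, $S_m \equiv 0$, and the claim holds trivially with equality. No substantive obstacle arises in this argument; the only minor care required is in aligning the exponents of $\vu$ and $\vv$ so that their entrywise product produces exactly $b_i \gamma_i^k$.
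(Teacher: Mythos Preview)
Your argument is correct and, in fact, cleaner than the paper's. Both proofs begin identically by reducing the claim to the algebraic inequality $(1+S_k)^2 \leq (1+S_{k-1})(1+S_{k+1})$ with $S_m = \sum_{i\geq 2} b_i\gamma_i^m$. From there the routes diverge: the paper expands the products, uses the scalar AM--GM bound $2\gamma_i^k \leq \gamma_i^{k-1}+\gamma_i^{k+1}$ to control the linear cross terms, and then proves the residual inequality $\bigl(\sum b_i\gamma_i^k\bigr)^2 \leq \bigl(\sum b_i\gamma_i^{k-1}\bigr)\bigl(\sum b_i\gamma_i^{k+1}\bigr)$ by induction on $N$. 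Your approach collapses all of this into a single Cauchy--Schwarz step by adjoining the entry $1$ to the vectors $\vu,\vv$, which simultaneously handles the constant, linear, and quadratic pieces. The gain is brevity and transparency; the paper's decomposition, on the other hand, makes the two separate mechanisms (pointwise convexity and the $\ell^2$ inequality) visible, and its induction is in effect a rediscovery of Cauchy--Schwarz. Your treatment of the higher-multiplicity and degenerate $b_i\equiv 0$ cases is also adequate and matches the paper's handling.
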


\begin{proof}
By \eqref{eq:rho-c1}, to prove the validity of the lemma, it suffices to show that:
\begin{equation}
\label{eq:rho-m1}
\left(1+\sum_{i=2}^N b_i\gamma_i^k \right)^2 \leq 
\left(1+\sum_{i=2}^N b_i\gamma_i^{k-1} \right) 
\left(1+\sum_{i=2}^N b_i\gamma_i^{k+1} \right),
\end{equation}
which is equivalent to 
\begin{equation}
2 \sum_{i=2}^N b_i\gamma_i^k + \left(\sum_{i=2}^N b_i\gamma_i^k \right)^2
\leq \sum_{i=2}^N b_i\Bigl(\gamma_i^{k-1} + \gamma_i^{k-1}\Bigr) 
+ \left(\sum_{i=2}^N b_i\gamma_i^{k-1} \right) 
\left(\sum_{i=2}^N b_i\gamma_i^{k+1} \right).
\end{equation}
Since $b_i\geq 0$, $\lambda_i\geq 0$, and $2\gamma_i\leq 1+\gamma_i^2$, we have
\[
2 \sum_{i=2}^N b_i\gamma_i^k \leq \sum_{i=2}^N b_i\Bigl(\gamma_i^{k-1} + 
\gamma_i^{k-1}\Bigr) ,
\]
Then it suffices to show the following inequality:
\begin{equation}
\label{eq:rho-m2}
a:=\left(\sum_{i=2}^N b_i\gamma_i^k \right)^2 - \left(\sum_{i=2}^N b_i\gamma_i^{k-1} 
\right) 
\left(\sum_{i=2}^N b_i\gamma_i^{k+1} \right) \leq 0,
\end{equation}
which will be proved by a standard inductive argument. To this end, let $N=2$, it is 
easy to see that \eqref{eq:rho-m2} holds with equality. Next, assume that 
\eqref{eq:rho-m2} holds for $N=n$. For 
$N = n+1$: we have
\begin{equation}
\label{eq:rho-m3}
\begin{aligned}
a &=  \left(\sum_{i=2}^n b_i\gamma_i^k + b_{n+1}\gamma_{n+1}^k\right)^2\!
- \left(\sum_{i=2}^n b_i\gamma_i^{k-1}+ b_{n+1}\gamma_{n+1}^{k-1}\right) \!
\left(\sum_{i=2}^n b_i\gamma_i^{k+1} + b_{n+1}\gamma_{n+1}^{k+1}\right) 
\\
\quad & \leq \left(\sum_{i=2}^n b_i\gamma_i^{k} \right) ^2
-\left(\sum_{i=2}^n b_i\gamma_i^{k-1} \right) \left(\sum_{i=2}^n b_i\gamma_i^{k+1} 
\right) 
- b_{n+1}\gamma_{n+1}^{k-1} \sum_{i=2}^n b_i(\gamma_{n+1} - 
\gamma_i)^2\gamma_i^{k-1}.
\end{aligned}
\end{equation}
Now \eqref{eq:rho-m2} is a direct consequence of the induction hypothesis. This 
completes the proof of the lemma.
\end{proof}


After the preparation, now we define the algebraic error estimator as follows at the 
$(k+1)$-th iteration of the solver: for $k\geq 1$

\begin{equation}
\label{eq:est-alg}
\eta_{a}^{(k+1)} 
:=e^{1/k}\frac{\rho_{\mathrm{err}}^{(k)}}{1-\rho_{\mathrm{err}}^{(k)}} 
\big\Vert{\bu^{(k+1)} - \bu^{(k)}}\big\Vert_{\bA} = 
e^{1/k}\frac{\rho_{\mathrm{err}}^{(k)}}{1-\rho_{\mathrm{err}}^{(k)}} 
\big\Vert{u_{_\cT}^{(k+1)} - u_{_\cT}^{(k)}}\big\Vert_A.
\end{equation}
The $e^{1/k}$ factor is added to remedy the fact that 
$\rho_{\mathrm{err}}^{(k)}$ converges to $\rho_{\mathrm{err}}$ from 
below.
Without it, the solver might stop too early, before a good estimate of
$\rho_{\mathrm{err}}$ is obtained.

\begin{theorem}[Reliability of the algebraic error estimator]
\label{thm:rak}
Under the same setting with Theorem \ref{thm:ra} and Lemma \ref{lem:rho-c}, there 
exists an $N\in \ZZ^+$ such that for all $k\geq N$, 
\begin{equation}
\big\Vert{\mathbf{e}^{(k+1)}}\big\Vert_{\bA} = 
\big\Vert{u_{_\cT} - u_{_\cT}^{(k+1)}}\big\Vert_A \leq 
\eta_{a}^{(k+1)} .
\end{equation}
\end{theorem}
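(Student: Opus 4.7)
The plan is to reduce the statement to a scalar inequality and then invoke Theorem \ref{thm:ra}. Setting $R := \rho_{\mathrm{err}}/(1-\rho_{\mathrm{err}})$ and $R_k := \rho_{\mathrm{err}}^{(k)}/(1-\rho_{\mathrm{err}}^{(k)})$, it suffices to show that $R \leq e^{1/k} R_k$ for all $k\geq N$, because Theorem \ref{thm:ra} then gives
\[
\norm{\be^{(k+1)}}_{\bA}\leq R\,\norm{\bu^{(k+1)}-\bu^{(k)}}_{\bA} \leq e^{1/k} R_k\, \norm{\bu^{(k+1)}-\bu^{(k)}}_{\bA} = \eta_a^{(k+1)}.
\]
The map $x\mapsto x/(1-x)$ is strictly increasing on $[0,1)$, so Lemmas \ref{lem:rho-c} and \ref{lem:rho-m} translate to $R_k\nearrow R$ with $R_k\leq R$ for every $k$.

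To make this quantitative I would rearrange the desired inequality as $R - R_k \leq R\,(1 - e^{-1/k})$. The cushion satisfies $1 - e^{-1/k}\geq 1/k - 1/(2k^2) \geq 1/(2k)$ for $k\geq 1$, so the right-hand side is at least $R/(2k)$. On the other hand, the closed form \eqref{eq:rho-c1} from Lemma \ref{lem:rho-c} shows that $\rho_{\mathrm{err}}^{(k)}$ approaches $\rho_{\mathrm{err}}$ geometrically, with rate governed by the spectral gap $\lambda_2/\lambda_1$ (or by $\lambda_{m+1}/\lambda_1$ when $\lambda_1$ has multiplicity $m$, after the grouping already used in Lemma \ref{lem:rho-c}). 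Consequently $R - R_k = O(\g^k) = o(1/k)$ for some $\g\in[0,1)$, and one can choose $N\in\ZZ^+$ such that $R - R_k \leq R/(2k)$ for all $k\geq N$, which produces the required inequality.

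The main subtlety is that the direction of the needed bound is opposite to what monotone convergence alone provides: because $R_k\leq R$ for every $k$, without the $e^{1/k}$ cushion one would only have the useless comparison $R_k\leq R$ and never obtain an upper bound on $\norm{\be^{(k+1)}}_{\bA}$ via $R_k$. The $e^{1/k}$ factor supplies the $O(1/k)$ multiplicative boost that overshoots $R$, and the reason this suffices is that $R_k\to R$ at a geometric---rather than merely $o(1)$---rate inherited from the spectral gap of $\bI-\bB\bA$. The rest of the argument is a matching of two rates, and the resulting $N$ depends on the unknown gap; this dependence is harmless because the theorem only asserts existence.
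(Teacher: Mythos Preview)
Your argument is correct and follows the same overall strategy as the paper: reduce via Theorem~\ref{thm:ra} to the scalar inequality $\xi\leq e^{1/k}\xi(k)$ (your $R\leq e^{1/k}R_k$), then exploit the geometric convergence $\rho_{\mathrm{err}}^{(k)}\to\rho_{\mathrm{err}}$ coming from the eigen-expansion \eqref{eq:rho-c1}. Where you diverge is in the finishing step. The paper treats $k$ as a continuous variable, differentiates $e^{1/k}\xi(k)$, and uses $p'(k)=O(\g^k)=o(k^{-2})$ to conclude that $e^{1/k}\xi(k)$ is eventually decreasing and hence approaches its limit $\xi$ from above. Your route is more elementary: you rewrite the target as $R-R_k\leq R(1-e^{-1/k})$, lower-bound the right side by $R/(2k)$, and note $R-R_k=O(\g^k)=o(1/k)$ by Lipschitz continuity of $x\mapsto x/(1-x)$ on $[0,\rho_{\mathrm{err}}]$. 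Both arguments rest on the same key fact (geometric versus $1/k$ rates), but yours avoids the calculus detour and the somewhat informal asymptotic equality \eqref{eq:rho-k2}; the paper's version, on the other hand, yields the slightly stronger qualitative statement that $e^{1/k}\xi(k)$ is eventually monotone.
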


\begin{proof}
Denote $p(k): = \rho_{\mathrm{err}}^{(k)}$, $\xi(k) := 
p(k)/\Bigl(1-p(k)\Bigr)$, 
and $\xi:=\rho_{\mathrm{err}}/(1-\rho_{\mathrm{err}}) $. By Theorem 
\ref{thm:ra}, it suffices to show that: there 
exists an $N$ such that for $k\geq N$
\begin{equation}
\label{eq:rho-k1}
\xi \leq e^{1/k} \xi(k) .
\end{equation}
It is straightforward to verify that $\xi(k)\to \xi$ from below as $p(k) \to 
\rho_{\mathrm{err}}$. Moreover, $e^{1/k} \xi(k) \to \xi$ as $k\to \infty$. 
Now it suffices to show that when $k$ is sufficiently large, $e^{1/k} \xi(k) $ is a 
decreasing function of $k$. Recalling from \eqref{eq:rho-c1} in Lemma 
\ref{lem:rho-c} that if $k$ is sufficiently large, 
\begin{equation}
\label{eq:rho-k2}
p(k) =\rho_{\mathrm{err}}^{(k)} \simeq \rho_{\mathrm{err}} \frac{1+ b 
\gamma^{k}}{1 + b \gamma^{k-1}},
\end{equation}
where $\gamma := (\lambda_{m+1}/\lambda_1)^2 < 1$, and $b := (c_{m+1}/c_1)^2 \geq 0$ 
where $m$ is the multiplicity of the largest eigenvalue. Taking the derivative of 
$e^{1/k} \xi(k)$ with respect to $k$ leads to:
\begin{equation}
\label{eq:rho-k3}
\frac{d}{dk}\big(e^{1/k} \xi(k)\big) 
= e^{1/k} 
\frac{-k^{-2}p(k)\big(1 - p(k)\big) + p'(k)}{\big(1 - p(k)\big)^2}.
\end{equation}
By \eqref{eq:rho-k2}, we have
\begin{equation}
\label{eq:rho-k4}
p'(k) \simeq \rho_{\mathrm{err}} \frac{b(\gamma-1)\gamma^{k-1}\ln \gamma }
{\big(1 + b \gamma^{k-1}\big)^2}
= O(\gamma^{k-1}).
\end{equation}
Using \eqref{eq:rho-k4} in \eqref{eq:rho-k3} and noting that $k^{-2}$ decreases
at a slower rate than $\gamma^{k-1}$, then for sufficiently large $k$,
$\frac{d}{dk}(e^{1/k} \xi(k))< 0$, and the theorem follows.
\end{proof}

\begin{remark}[Speed up of the rate of convergence estimate]
We notice that without the correction factor in \eqref{eq:est-alg}, the closer 
$\rho_{\mathrm{err}}^{(k)}$ is to $\rho_{\mathrm{err}}$, the more accurate the 
algebraic estimator is. The convergence of $\rho_{\mathrm{err}}^{(k)}$ can be 
accelerated in the following way: 
\[
\rho_{\mathrm{err}}^{(k)}  \approx
\rho_{\mathrm{err}} \frac{1+ b \gamma^{k}}{1 + b \gamma^{k-1}}\quad,\text{ and }\quad
\rho_{\mathrm{err}}^{(k-1)}  \approx
\rho_{\mathrm{err}} \frac{1+ b \gamma^{k-1}}{1 + b \gamma^{k-2}}.
\]
Now we define for $k\geq 2$:
\begin{equation}
\widehat{\rho}_{\mathrm{err}}^{(k)} : = \rho_{\mathrm{err}}^{(k)} 
\frac{\rho_{\mathrm{err}}^{(k)}}{\rho_{\mathrm{err}}^{(k-1)}}.
\end{equation}
And $\widehat{\rho}_{\mathrm{err}}^{(k)}$ converges to $\rho_{\mathrm{err}}$ faster 
than 
the original $\rho_{\mathrm{err}}^{(k)}$. To see this, taking derivative of 
$\widehat{\rho}_{\mathrm{err}}^{(k)}$ with respect to $k$ gives,
\[
\frac{d}{dk}\big(\widehat{\rho}_{\mathrm{err}}^{(k)}\big)= O(\gamma^{k-2}),
\]
which is an order faster than the convergence of $\rho_{\mathrm{err}}^{(k)}$ in 
\eqref{eq:rho-k4}.
\end{remark}

\section{Discretization-accurate stopping criterion}
\label{sec:stopping}

Identity (\ref{eq:est-id}) clearly indicates that the iterative solver should be
stopped when the algebraic error is of the same magnitude as the discretization
error. This observation suggests the following stopping criterion: let 
$\eta_{d}^{(k)}$ be $\eta_{d}$ from \eqref{eq:est-eta} computed using the iterate 
$u_{_\cT}^{(k)}$, the iterative solver shall stop when
\begin{equation}
\label{alg:stop}
\eta_{a}^{(k)} < \varepsilon^{-1} \cdot\eta_{d}^{(k)} \; \text{ and }\;   
\abs{\rho_{\mathrm{err}}^{(k)}/\rho_{\mathrm{err}}^{(k-1)}-1 }< 
\varepsilon_{\rho},  
\end{equation}
where $\varepsilon=\eta_d/\norm{u - u_{\sub{\cT}}}_A$ is the effectivity index.
In light of the proof of Theorem~\ref{thm:rak}, the second condition implies that $\rho_{\mathrm{err}}^{(k)}$ 
is a good approximation to $\rho_{\mathrm{err}}$ and, hence, $\eta^{(k)}_{a}$ is an accurate representation of 
the algebraic error $\norm{u_{_\cT} - u_{_\cT}^{(k)}}_A$ at the $k$-th iteration. Together with the 
first condition, the estimated algebraic error is of the same magnitude in the 
discretization error. 

\section{Numerical examples}
\label{sec:numex}

In this section, several examples are presented to verify the reliability of 
the estimators proposed, as well as the stopping criterion.
The error estimator $\eta_{d}$, using a localized equilibrated flux to solve
\eqref{eq:loc-fa}, is implemented and publicly available in $i$FEM 
\cite{Chen.L2008c}. The initial guess 
for all examples presented in this section is a random guess with each entry of 
$\bu^{(0)}$ satisfying a uniform distribution in $[-1,1]$ using a fixed seed.
An effectivity index of $\varepsilon=1.5$ or $\varepsilon^{-1}=2/3 \approx 0.67$
is used in \eqref{alg:stop}.  This is similar to typical values used in practice
when $u_{\sub{\cT}}$ is computed with a direct solver.

The first test problem is the Poisson equation 
\[
-\Delta u = f, \mbox{ in } \Omega =(-1,\,1)^2
\] 
with Dirichlet boundary conditions 
and the exact solution is given by
\[
u = \alpha\Big( \sin(\pi x)\sin(\pi y) + 0.5\sin(4\pi x)\sin(4\pi y) \Big),
\]
where the constant $\alpha$ is chosen such that $\norm{u}_A = 1$.
This problem is discretized by the continuous piecewise linear finite element method 
on a uniform triangular mesh with mesh size $h=1/32$. 

The resulting system of algebraic equations is first solved by a multigrid method with
$V(1,1)$-cycle. 
Convergence of the multigrid solver in the energy norm along with the algebraic 
estimator are depicted in Figure \ref{fig:ex1-conv} (see the red and blue dot-circle 
lines), 
which numerically verify Theorem \ref{thm:rak} for the algebraic estimator $\eta_a$ being an upper bound of the 
algebraic error.
The total and the discretization errors along with the discretization estimator are also depicted in Figure \ref{fig:ex1-conv} 
(see the red solid-diamond, the red dot, and the blue solid-diamond lines, respectivley).
Estimated convergence rates based on both $\rho_{\mathrm{err}}^{(k)}$ and $\hat{\rho}_{\mathrm{err}}^{(k)}$
are presented to numerically verify Remark~9.

Using the first stopping criterion in \eqref{alg:stop} with $\varepsilon^{-1} = 
0.67$, the multigrid iteration 
stops after merely two iterations, and Figure \ref{fig:ex1-conv} shows that 
the algebraic error already drops below the discretization error. One conventional 
stopping criterion, which is usually integrated in high performance computing 
packages such as \emph{hypre} \cite{Falgout2002hypre}, uses the relative residual 
measured in the $\ell^2$-norm: when setting $\norm{\bA \mathbf{e}^{(k)}}_0/\norm{\bA 
\mathbf{e}^{(0)}}_0\leq 10^{-7}$, the multigrid iteration stops after fifteen 
iterations.
For a slower iterative solver, we also implement symmetric 
Gauss-Seidel iterative method. The first stopping criterion in \eqref{alg:stop} with 
$\varepsilon^{-1} = 0.67$
requires only thirty-one iterations, while the conventional stopping criterion with the tolerance $10^{-5}$
needs more than two hundred eighty iterations. 
These results show a dramatic reduction in computational cost
when using the discretization-accurate stopping criterion introduced in this paper.
The numbers of iterations for the multigrid and the symmetric Gauss-Seidel iterative methods with
both the stopping criterions as well as the total and the algebraic errors 
are summarized in Table~\ref{table:ex-comp}. As observed from Table~\ref{table:ex-comp}, 
additional iterations needed by the conventional stopping criterion significantly 
decrease the algebraic errors but not the total errors.
Figure \ref{fig:ex1-soln} compares the solution $u_{_\cT}$ obtained by a direct 
solver
with that of a multigrid solver after 2 iterations.

\begin{table}[h]
\caption{The number of iterations and the total and algebraic errors for the Poisson problem.} 
\centering  
\begin{tabular}{|c|c|c|c|c|} 
\hline & Stopping  &\# Iter 
& $\norm{u - {u}_{_\cT}^{(k)}}_A$ & $\norm{u_{_\cT} - u_{_\cT}^{(k)}}_A$ 
\\[1mm]
\hline
MG V(1,1)  &  $\eta_a\leq 0.67 \eta_d$ & 2 & $0.0821$ & $3.5\times 10^{-1}$  
\\[1mm]
\hline
MG V(1,1)  &  ${\norm{\mathbf{r}_{k}}_2}/{\norm{\mathbf{r}_0}_2}\leq 10^{-7}$ & 
15 & $0.0741$  &   $3.4\times 10^{-8}$
\\[1mm]
\hline
Sym GS  & $\eta_a\leq 0.67 \eta_d$ & 31  &  $0.1051$  & 
$7.5\times 10^{-1}$   
\\[1mm]
\hline
Sym GS  &  ${\norm{\mathbf{r}_{k}}_2}/{\norm{\mathbf{r}_0}_2}\leq 10^{-5}$ & 
 289 &  $0.0741$   &  $2.7\times 10^{-4}$
\\[1mm]
\hline
\end{tabular}
\label{table:ex-comp}
\end{table}

\begin{figure}[h]
\centering
\begingroup
\captionsetup[subfigure]{width=0.45\textwidth}
\subfloat[The convergence of the $V(1,1)$-cycles.]
{
\includegraphics[height=0.25\textheight]{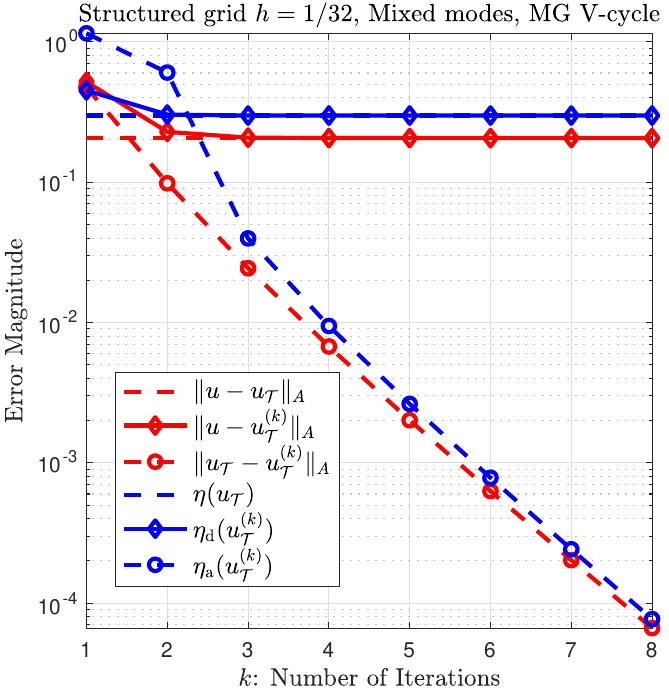}
\label{fig:ex1-conv}
}
\qquad\qquad
\vspace{0.1in}
\subfloat[The convergence of the estimated rate of convergence]
{
\includegraphics[height=0.25\textheight]{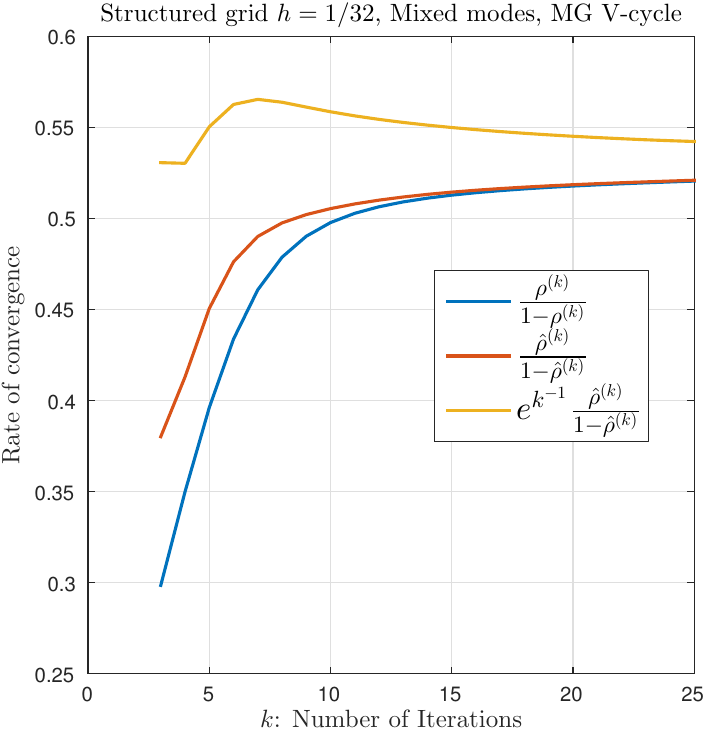}
\label{fig:ex1-rho}
}
\endgroup
\caption{The convergence results for the Poisson problem: the solution has mixed modes, the 
problem is discretized on a uniform triangular mesh, and the linear system is 
approximated using $V(1,1)$-cycle iterations.}
\end{figure}

\begin{figure}[h]
\centering
\begingroup
\subfloat[$u_h$ obtained by a direct solver.]
{
\includegraphics[width=0.4\textwidth]{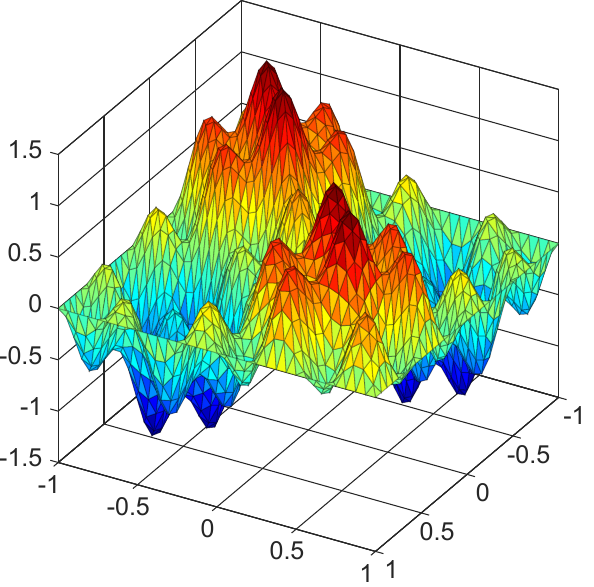}
}
\qquad\qquad
\vspace{0.1in}
\subfloat[$u_h^{(2)}$ obtained by two $V(1,1)$-cycles.]
{
\includegraphics[width=0.4\textwidth]{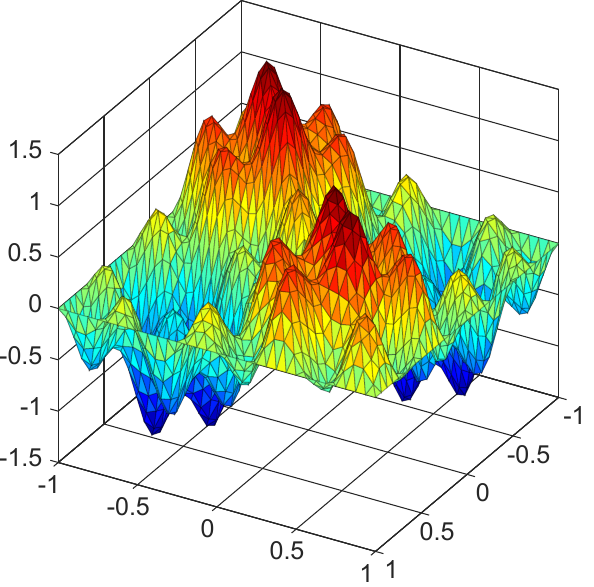}
}
\endgroup
\caption{The comparison of the direct-solved approximation $u_h$ and the multigrid 
iterate $u_h^{(2)}$ in the first test problem.}
\label{fig:ex1-soln}
\end{figure}


The second test problem tests the stopping criterion on a non-uniform mesh 
for the Kellogg intersecting interface problem. The Kellogg problem with a 
checkerboard coefficient 
distribution \cite{Kellogg74interface} is a
commonly used benchmark for testing the efficiency 
and robustness of a posteriori error estimators
(\cite{Cai12eqrobust,Cai15recovery,Cai16robust,Chen02efficiency,Petzoldt02discont}):
\begin{equation}\label{Kellogg}
-\divv(A\nabla u) = 0, \text{ in } \Omega= (-1, 1)^2
\end{equation}
with Dirichlet boundary condition, where the diffusion coefficient $A$ is given by
\[
A= 
\begin{cases} 
R & \text{ in } (0, 1)^2 \cup (-1, 0)^2,
\\[2mm]
1 & \text{ in } \Omega\backslash \Big((0, 1)^2 
\cup (-1, 0)^2\Big).
\end{cases}
\]
The exact solution $u$ of \eqref{Kellogg} is given in polar coordinates 
$(r,\theta)$:
\[
u = r^{\gamma} \psi(\theta) \in H^{1+\gamma-\epsilon}(\Omega)
\,\,\text{ for any }\,\, \epsilon >0,
\]
where the definition of $\psi(\theta) $ is given in, e.g., \cite{Chen02efficiency}.
Here the parameters are:
\[
\gamma = 0.5, \quad R \approx 5.8284271247461907, \quad
\rho = \pi/4, \,\text{ and }\, \sigma \approx -2.3561944901923448.
\]

For this example, $\cT$ is a graded mesh on which the relative error for the 
direct solve $\norm{u - u_{_{\cT}}}_A/ \norm{u}_A \approx 10\%$, in addition, 
we choose $\varepsilon^{-1} = 0.67$ and $\varepsilon_{\rho} = 0.1$ 
for the stopping criterion. The stopping criterion \eqref{alg:stop} is checked 
every three $V(1,1)$-cycles. The local error distribution is shown in Figure 
\ref{fig:ex3-err}.

\begin{table}[h]
\caption{The number of iterations and the total and algebraic errors for the Kellogg problem.} 
\centering  
\begin{tabular}{|c|c|c|c|c|} 
\hline & Stopping  &\# Iter 
& $\norm{u - {u}_{_\cT}^{(k)}}_A$ & $\norm{u_{_\cT} - u_{_\cT}^{(k)}}_A$ 
\\[1mm]
\hline
MG V(1,1)  &  $\eta_a\leq 0.67 \eta_d$ & 2 & $0.05141$ & $1.577\times 10^{-3}$  
\\[1mm]
\hline
MG V(1,1)  &  ${\norm{\mathbf{r}_{k}}_2}/{\norm{\mathbf{r}_0}_2}\leq 10^{-7}$ & 
6 & $0.05139$  &   $8.026\times 10^{-8}$
\\[1mm]
\hline
\end{tabular}
\label{table:ex3-comp}
\end{table}

\begin{figure}[h]
\centering
\subfloat[$\norm{u - u_{_\cT}}_{A,K}$, the local energy error.]
{
\includegraphics[width=0.28\textwidth]{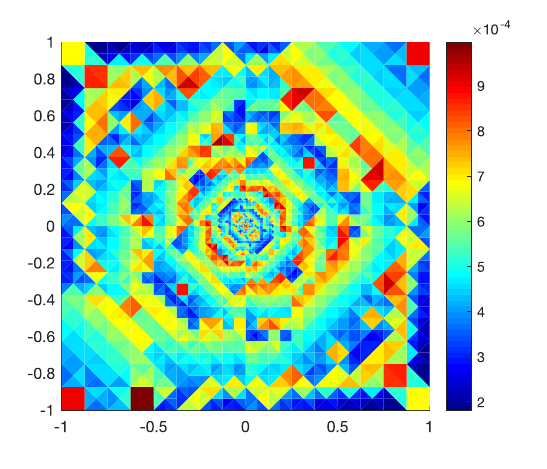}
}
\quad
\subfloat[The local error indicator $\eta_{d,K}$ in \eqref{eq:est-eta} using direct 
solve $u_{_\cT}$.]
{
\includegraphics[width=0.28\textwidth]{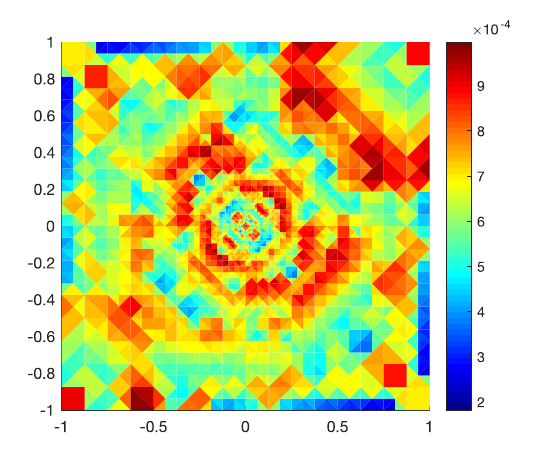}
}
\quad
\subfloat[The local error indicator $\eta_{d,K}$ using iterate $\bar{u}_{_\cT}$.]
{
\includegraphics[width=0.28\textwidth]{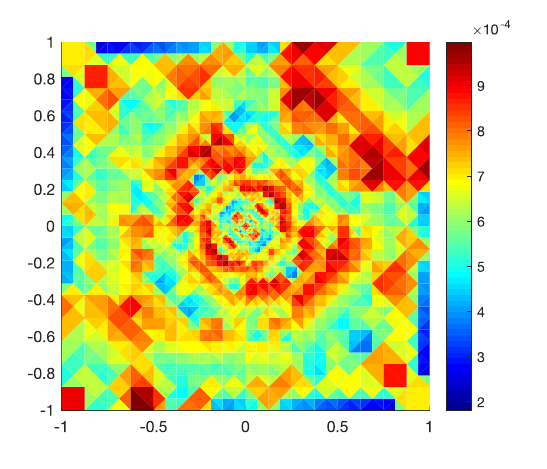}
}
\caption{The comparison the local error and the error indicator distributions.}
\label{fig:ex3-err}
\end{figure}

%

\begin{thebibliography}{10}

\bibitem{Ainsworth-Oden}
{\sc M.~Ainsworth and J.~T. Oden}, {\em A posteriori error estimation in finite
  element analysis}, vol.~37, John Wiley \& Sons, 2011.

\bibitem{Arioli2013stopping}
{\sc M.~Arioli, E.~H. Georgoulis, and D.~Loghin}, {\em Stopping criteria for
  adaptive finite element solvers}, SIAM Journal on Scientific Computing, 35
  (2013), pp.~A1537--A1559.

\bibitem{Arioli2012interplay}
{\sc M.~Arioli, J.~Liesen, A.~Miedlar, and Z.~Strako{\v{s}}}, {\em Interplay
  between discretization and algebraic computation in adaptive numerical
  solution of elliptic PDE problems}, STFC, 2012.

\bibitem{Arioli2005stopping}
{\sc M.~Arioli, D.~Loghin, and A.~J. Wathen}, {\em Stopping criteria for
  iterations in finite element methods}, Numerische Mathematik, 99 (2005),
  pp.~381--410.

\bibitem{Becker1995adaptive}
{\sc R.~Becker, C.~Johnson, and R.~Rannacher}, {\em Adaptive error control for
  multigrid finite element}, Computing, 55 (1995), pp.~271--288.

\bibitem{Bernardi00nonsmooth}
{\sc C.~Bernardi and R.~Verf\"{u}rth}, {\em Adaptive finite element methods for
  elliptic equations with non-smooth coefficient}, Numerische Mathematik, 85
  (2000), pp.~579--608.

\bibitem{Braess-07}
{\sc D.~Braess}, {\em {Finite Elements: Theory, Fast Solvers, and Applications
  in Solid Mechanics}}, Cambridge University Press, 2007.

\bibitem{Braess06eqres}
{\sc D.~Braess and J.~Sch\"{o}berl}, {\em Equilibrated residual error estimator
  for edge elements}, Math. Comp., 77 (2008), pp.~651--672.

\bibitem{Brezzi-Fortin}
{\sc F.~Brezzi and M.~Fortin}, {\em {Mixed and Hybrid Finite Element Methods}},
  Springer, 1991.

\bibitem{Kellogg74interface}
{\sc R.~Bruce~Kellogg}, {\em On the poisson equation with intersecting
  interfaces}, Applicable Analysis, 4 (1974), pp.~101--129.

\bibitem{Cai15recovery}
{\sc Z.~Cai and S.~Cao}, {\em {A recovery-based a posteriori error estimator
  for H(curl) interface problems}}, Comput. Methods in Appl. Mech. Eng., 296
  (2015), pp.~169--195.

\bibitem{Cai16robust}
{\sc Z.~Cai, S.~Cao, and R.~Falgout}, {\em {Robust a posteriori error
  estimation for finite element approximation to H(curl) problem}}, Comput.
  Methods in Appl. Mech. Eng., 309 (2016), pp.~182--201.

\bibitem{Cai12eqrobust}
{\sc Z.~Cai and S.~Zhang}, {\em Robust equilibrated residual error estimator
  for diffusion problems: Conforming elements}, SIAM J. Numer. Anal., 50
  (2012), pp.~151--170.

\bibitem{Chen.L2008c}
{\sc L.~Chen}, {\em {$i$FEM}: an innovative finite element methods package in
  {MATLAB}},  (2008).
\newblock \url{https://github.com/lyc102/ifem}.

\bibitem{Chen2009convergence}
{\sc L.~Chen, M.~Holst, and J.~Xu}, {\em Convergence and optimality of adaptive
  mixed finite element methods}, Mathematics of Computation, 78 (2009),
  pp.~35--53.

\bibitem{Chen02efficiency}
{\sc Z.~Chen and S.~Dai}, {\em On the efficiency of adaptive finite element
  methods for elliptic problems with discontinuous coefficients}, SIAM Journal
  on Scientific Computing, 24 (2002), pp.~443--462.

\bibitem{Destuynder99explicit}
{\sc P.~Destuynder and B.~M{\'e}tivet}, {\em Explicit error bounds in a
  conforming finite element method}, Mathematics of Computation of the American
  Mathematical Society, 68 (1999), pp.~1379--1396.

\bibitem{Dolejvsi2013}
{\sc V.~Dolej{\v{s}}{\'\i}, I.~{\v{S}}ebestov{\'a}, and M.~Vohral{\'\i}k}, {\em
  Algebraic and discretization error estimation by equilibrated fluxes for
  discontinuous galerkin methods on nonmatching grids}, Journal of Scientific
  Computing,  (2013), pp.~1--34.

\bibitem{Dryja96quasimonotone}
{\sc M.~Dryja, M.~V. Sarkis, and O.~B. Widlund}, {\em Multilevel schwarz method
  for elliptic problems with discontinuous coefficients in three dimensions},
  Numerische Mathematik, 72 (1996), pp.~313--348.

\bibitem{Ekeland-Temam}
{\sc I.~Ekeland and R.~Temam}, {\em Convex analysis and variational problems},
  (1976).

\bibitem{Falgout2002hypre}
{\sc R.~D. Falgout and U.~M. Yang}, {\em \emph{hypre}: A library of high
  performance preconditioners}, in International Conference on Computational
  Science, Springer, 2002, pp.~632--641.

\bibitem{Jiranek2010}
{\sc P.~Jir{\'a}nek, Z.~Strako{\v{s}}, and M.~Vohral{\'\i}k}, {\em A posteriori
  error estimates including algebraic error and stopping criteria for iterative
  solvers}, SIAM Journal on Scientific Computing, 32 (2010), pp.~1567--1590.

\bibitem{Ortega-Rheinboldt}
{\sc J.~Ortega and W.~Rheinboldt}, {\em Iterative Solution of Nonlinear
  Equations in Several Variables}, Classics in Applied Mathematics, SIAM, 1970.

\bibitem{Petzoldt02discont}
{\sc M.~Petzoldt}, {\em A posteriori error estimators for elliptic equations
  with discontinuous coefficients}, Advances in Computational Mathematics, 16
  (2002), pp.~47--75.

\bibitem{Rannacher1999}
{\sc R.~Rannacher}, {\em Error control in finite element computations},
  Springer, 1999.

\bibitem{Verfurth09note}
{\sc R.~Verf{\"u}rth}, {\em A note on constant-free a posteriori error
  estimates}, SIAM Journal on Numerical Analysis, 47 (2009), pp.~3180--3194.

\bibitem{Verfurth-13}
{\sc R.~Verf{\"u}rth}, {\em A posteriori error estimation techniques for finite
  element methods}, OUP Oxford, 2013.

\end{thebibliography}

\end{document}